\def\d{\delta}
\def\argmax{\mbox{argmax}}
\def\ai{\text{\rm Ai}}
\def\bi{\text{\rm Bi}}
\def\hi{\text{\rm Hi}}
\def\R{\mathbb R}
\def\d{\delta}
\def\M{{\cal M}}
\def\l{\lambda}
\def\labda1{\lambda_1}
\def\labda2{\lambda_2}
\def\e{\varepsilon}
\def\f{\phi}
\def\t{\tau}
\def\s{\sigma}
\def\comment#1{\relax}
\def\=in{\mathop{\rm =}}
\newtheorem{theorem}{Theorem}[section]
\newtheorem{corollary}{Corollary}[section]
\newtheorem{lemma}{Lemma}[section]
\newtheorem{remark}{Remark}[section]
\numberwithin{equation}{section}
\theoremstyle{plain}
\begin{document}

\begin{frontmatter}
\title{Chernoff's distribution and differential equations of parabolic and Airy type}
\runtitle{Chernoff's distribution}

\begin{aug}
\author{\fnms{Piet} \snm{Groeneboom}\corref{}\ead[label=e1]{P.Groeneboom@tudelft.nl}}
\ead[label=u1,url]{http://dutiosc.twi.tudelft.nl/\textasciitilde pietg/}
\address{Delft University\\
\printead{e1},\printead{u1}}
\author{\fnms{Steven} \snm{Lalley}\ead[label=e2]{lalley@galton.uchicago.edu}}
\ead[label=u2,url]{http://www.stat.uchicago.edu/faculty/lalley.shtml}
\address{University of Chicago\\
\printead{e2},\printead{u2}}
\author{\fnms{Nico} \snm{Temme}\ead[label=e3]{Nico.Temme@cwi.nl}}
\ead[label=u3,url]{http://homepages.cwi.nl/\textasciitilde nicot/}
\address{CWI\\
\printead{e3},\printead{u3}}
\affiliation{Delft University, CWI and University of Chicago}
\runauthor{P.\ Groeneboom, N.M.\ Temme and S.P.\ Lalley}
\end{aug}

\begin{abstract}
We give a direct derivation of the distribution of the maximum and the location of the maximum of one-sided and two-sided Brownian motion with a negative parabolic drift. The argument  uses a relation between integrals of special functions, in particular involving integrals with respect to functions which can be called ``incomplete Scorer functions". The relation is proved by showing that both integrals, as a function of two parameters, satisfy the same extended heat equation, and the maximum principle is used to show that these solution must therefore have the stated relation. Once this relation is established, a direct derivation of the distribution of the maximum and location of the maximum of Brownian motion minus a parabola is possible, leading to a considerable shortening of the original proofs.
\end{abstract}

\begin{keyword}[class=AMS]
\kwd[Primary ]{60J65}
\end{keyword}

\begin{keyword}
\kwd{parabolic partial differential equations}
\kwd{Airy functions}
\kwd{Scorer's functions}
\kwd{Brownian motion}
\kwd{parabolic drift}
\kwd{Cameron-Martin-Girsanov}
\kwd{Feynman-Kac}
\end{keyword}

\end{frontmatter}

\section{Introduction}
\label{section:intro}
\setcounter{equation}{0}
Let $\{W(t):\,t\in\R\}$ be standard two-sided Brownian motion, originating from zero. The determination of the distribution of the (almost surely unique) location of the maximum of $\{W(t)-t^2:t\in\R\}$ has a long history, which probably started with Chernoff's paper \cite{chernoff:64} in a study of the limit distribution of an estimator of the mode of a distribution. In the latter paper the density of the location of the maximum of $\{W(t)-t^2:t\in\R\}$, which we will denote by
\begin{equation}
\label{def_Z}
Z=\argmax_t\{W(t)-t^2,\,t\in\R\},
\end{equation}
is characterized in the following way. Let $u(t,x)$  be the solution of the heat equation
$$
\frac{\partial}{\partial t}u(t,x)=-\tfrac12\frac{\partial^2}{\partial x^2}u(t,x),
$$
for $x\le t^2$, under the boundary conditions
$$
u(t,x)\ge0,\qquad u(t,t^2)\stackrel{\mbox{def}}=\lim_{x\uparrow t^2}u(t,x)=1,\qquad (t,x)\in\R^2,\qquad \lim_{x\downarrow -\infty}u(t,x)=0,\qquad t\in\R.
$$
Furthermore, let the function $u_2$ be defined by
$$
u_2(t)=\lim_{x\uparrow t^2}\frac{\partial}{\partial x}u(t,x).
$$
Then the density of (\ref{def_Z}) is given by
\begin{equation}
\label{f_Z}
f_Z(t)=\tfrac12u_2(t)u_2(-t),\,t\in\R.
\end{equation}

The original attempts to compute the density $f_Z$ were based on numerically solving the heat equation above, but it soon became clear that this method did not produce a very accurate solution, mainly because of the rather awkward boundary conditions. However, around 1984 the connection with Airy functions was discovered and this connection was exploited to give analytic solutions in the papers \cite{daniels:85}, \cite{nico:85} and \cite{gro:89}, which were all written in 1984, although the last paper appeared much later.

There seems to be a recent revival of interest in this area of research, see, e.g., \cite{svante:10}, 
\cite{piet:10b}, \cite{piet:11d}, \cite{nico_piet:11}, \cite{leandro:12} and \cite{svante:13}. Also, the main theorem (Theorem 2.3) in \cite{dembo:08}) uses Theorem 3.1 of \cite{gro:89} in an essential way. These recent papers (except \cite{leandro:12}) rely a lot on the results in \cite{daniels:85} and \cite{gro:89}, but it seems fair to say that the derivation of these results in \cite{daniels:85} and \cite{gro:89} is not a simple matter. The most natural approach still seems to use the Cameron-Martin-Girsanov formula for making the transition from Brownian motion with drift to Brownian motion without drift, and next to use the Feynman-Kac formula for determining the distribution of the Radon-Nikodym derivative of the Brownian motion with parabolic drift with respect to the Brownian motion without drift from the corresponding second order differential equation. This is the approach followed in \cite{gro:89}. However, the completion of these arguments used a lot of machinery which one would prefer to avoid. For this reason we give an alternative approach in the present paper.

The starting point of our approach is Theorem 2.1 in \cite{gro:89}, which is given below for convenience. Theorem 2.1 in \cite{gro:89} in fact deals with the process $\{W(t)-ct^2:t\in\R\}$ for an arbitrary positive constant $c>0$, but since we can always deduce the results for general $c$ from the case $c=1$, using Brownian scaling, see, e.g., \cite{svante:13}, we take for convenience $c=1$ in the theorem below. Another simplification is that we consider first hitting times of $0$ for processes starting at $x<0$ instead of first hitting times of $a$ of processes starting at $x<a$ for an arbitrary $a\in\R$, using space homogeneity. We made slight changes of notation, in particular the function $h_x$, $x>0$, of \cite{gro:89} is again denoted by $h_{x}$, but now with a negative argument, so $h_x$ in our paper corresponds to $h_{-x}$ in \cite{gro:89}.

\begin{theorem}[Theorem 2.1 in \cite{gro:89}]
\label{th:stopping_time}
 Let, for $s\in\R$ and $x<0$, $Q^{(s,x)}$ be the probability measure on the Borel $\s$-field of $C([s,\infty):\R)$, corresponding to the process $\{X(t):t\ge s\}$, where $X(t)=W(t)-t^2$, starting at position $x$ at time $s$, and where $\{W(t):t\ge s\}$ is Brownian motion, starting at $x+s^2$ at time $s$. Let the first passage time $\t_0$ of the process $X$ be defined by
$$
\t_0=\inf\{t\ge s:X(t)=0\},
$$
where, as usual, we define $\t_0=\infty$, if $\{t\ge s:X(t)=0\}=\emptyset$. Then
\begin{enumerate}
\item[(i)]
\begin{align*}
Q^{(s,x)}\left\{\t_0\in dt\right\}=e^{-\tfrac23\bigl(t^3-s^3\bigr)+2sx}\psi_{x}(t-s)
E^0\left\{e^{-2\int_0^{t-s}B(u)\,du}\Bigm|B(t-s)=-x\right\}\,dt,
\end{align*}
where $B$ is a \text{\rm Bes(3)} process, starting at zero at time $0$, with corresponding expectation $E^0$, and where $\psi_z(u)=\bigl(2\pi u^3\bigr)^{-1/2}z\exp\bigl(-z^2/(2u)\bigr),\,u,z>0,$ is the value at $u$ of the density of the first passage time through zero of Brownian motion, starting at $z$ at time $0$.
\item[(ii)]
\begin{align*}
Q^{(s,x)}\left\{\t_0\in dt\right\}=e^{-\tfrac23\bigl(t^3-s^3\bigr)+2sx}h_{x}(t-s)\,dt,
\end{align*}
where the function $h_{x}:\R_+\to\R_+$ has Laplace transform
\begin{align*}
\hat h_{x}(\l)=\int_0^{\infty}e^{-\l u}h_{x}(u)\,du
=\ai\bigl(\xi-4^{1/3}x\bigr)/\ai(\xi),\,\quad \xi=2^{-1/3}\l>0,
\end{align*}
and $\ai$ denotes the Airy function $\ai$.
\end{enumerate}
\end{theorem}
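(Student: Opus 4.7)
\emph{Part (i): Cameron--Martin--Girsanov plus Bes(3) time--reversal.} My plan is to remove the $-2t$ drift from $X$ via Girsanov and then pass to a Bessel bridge description of BM conditioned on its first passage time. Under $Q^{(s,x)}$ we have $dX(u)=-2u\,du+dW(u)$, so if $P^{(s,x)}$ denotes the driftless law (standard BM from $x$ at time $s$), Girsanov gives, for any $t\ge s$,
\[
\frac{dQ^{(s,x)}}{dP^{(s,x)}}\Bigm|_{\mathcal F_t}
=\exp\!\left(\int_s^t(-2u)\,dX(u)-\tfrac12\int_s^t4u^2\,du\right).
\]
Itô/integration by parts turns $\int_s^t(-2u)\,dX(u)$ into $-2tX(t)+2sx+2\int_s^tX(u)\,du$, and on $\{\tau_0=t\}$ the first term vanishes. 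Applying Girsanov at the stopping time $\tau_0$ (using optional stopping on $\{\tau_0\le T\}$ and letting $T\to\infty$) yields
\[
Q^{(s,x)}\{\tau_0\in dt\}
=e^{-\tfrac23(t^3-s^3)+2sx}\,E_{P}^{(s,x)}\!\left[\exp\!\left(2\int_s^tX(u)\,du\right)\mathbf{1}_{\tau_0\in dt}\right].
\]
Setting $Y(u)=-X(s+u)$, the process $Y$ under $P^{(s,x)}$ is standard BM starting at $-x>0$ whose first hit of $0$ has density $\psi_{-x}(\,\cdot\,)$ at time $t-s$; moreover $\int_s^tX(u)\,du=-\int_0^{t-s}Y(v)\,dv$. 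Factoring the hitting-time density out of the expectation gives
\[
Q^{(s,x)}\{\tau_0\in dt\}
=e^{-\tfrac23(t^3-s^3)+2sx}\,\psi_{-x}(t-s)\,
E_{P}^{(0,-x)}\!\left[e^{-2\int_0^{t-s}Y(v)\,dv}\,\Bigm|\,\tau_0^Y=t-s\right]dt.
\]
Finally I invoke Williams' time-reversal identity: the law of $\{Y(u):0\le u\le t-s\}$ under $P^{(0,-x)}(\,\cdot\mid\tau_0^Y=t-s)$ coincides with the law of the time-reversal $\{B(t-s-u):0\le u\le t-s\}$ of a Bes(3) process $B$ started at $0$ and conditioned on $B(t-s)=-x$. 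Since $\int_0^{t-s}Y(v)\,dv=\int_0^{t-s}B(t-s-v)\,dv=\int_0^{t-s}B(v)\,dv$ is invariant under reversal, this gives (i).

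\emph{Part (ii): Feynman--Kac and Airy's equation.} Denote by $\tilde h_y(u)$ the sub-density obtained from (i) with $y=-x>0$, so that $h_x(u)=\tilde h_y(u)$. By construction,
\[
\hat h_x(\lambda)=\int_0^\infty e^{-\lambda u}\tilde h_y(u)\,du=E^{y}\!\left[\exp\!\left(-\lambda\tau_0^Y-2\int_0^{\tau_0^Y}Y(v)\,dv\right)\right],
\]
the Laplace transform of the (sub-)hitting time with the Feynman--Kac weight $e^{-2\int_0^{\cdot}Y}$. Setting $v(y)=v(y,\lambda)$ equal to this expectation, standard Feynman--Kac reasoning (the functional $\exp(-\lambda t-2\int_0^tY\,dv)\cdot v(Y(t))$ is a local martingale on $[0,\tau_0^Y]$) shows that $v$ solves the ODE
\[
\tfrac12 v''(y)-(\lambda+2y)\,v(y)=0,\qquad y>0,
\]
subject to $v(0)=1$ and $v(y)\to 0$ as $y\to\infty$. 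A change of variables $z=\xi+4^{1/3}y$, $\xi=2^{-1/3}\lambda$, converts this to Airy's equation $v_{zz}=z\,v$; the unique solution that decays as $z\to\infty$ is a multiple of $\ai(z)$, and the boundary condition $v(0)=1$ fixes the multiplier as $1/\ai(\xi)$. Evaluating at $y=-x$ yields the stated Laplace transform.

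\emph{Where the work lies.} The computations are largely routine once the setup is in place; the step that needs genuine care is the passage from the Girsanov expectation at the stopping time $\tau_0$ to the Bes(3)-bridge representation in (i). This requires (a) a proper truncation to justify optional stopping at $\tau_0$ and the interchange of expectation and disintegration over $\tau_0$, and (b) a clean statement of the time-reversal identity between BM conditioned on a first passage and a Bes(3) bridge. For (ii), one must verify that $v(y,\lambda)$ is indeed the decaying Airy solution (not a linear combination involving $\bi$), which follows from the a priori bound $|v(y,\lambda)|\le 1$ together with the exponential growth of $\bi$.
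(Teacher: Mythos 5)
Your proposal is correct and follows essentially the same route as the paper's Appendix A: Cameron--Martin--Girsanov to strip the $-2t$ drift (with the same integration by parts killing the $-2tX(t)$ term on $\{\tau_0=t\}$), followed by Feynman--Kac reduction to the Airy equation for the Laplace transform; your only departures are that you make explicit the Williams time-reversal step behind the Bes(3)-bridge representation, which the paper simply cites as standard, and that you derive the ODE from the expectation rather than verifying the martingale property of the known Airy solution as the paper does. Both variants are sound, and your selection of the decaying Airy solution via the bound $|v|\le 1$ matches the paper's uniqueness condition $u\le 1$.
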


\begin{remark}
{\rm Note that the function $h_x$ in the definition of the density of the stopping time $\t_0$ has by part (ii) of Theorem \ref{th:stopping_time} the representation
\begin{equation}
\label{def_h_x}
h_{x}(t)=\frac1{2\pi}\int_{v=-\infty}^{\infty}e^{itv}\frac{\ai(i2^{-1/3}v-4^{1/3}x)}{\ai(i2^{-1/3}v)}\,dv,\qquad\,t>0.
\end{equation}
This representation is obtained by inverting the Laplace transform and will be used in Section \ref{section:max+locmax} and the proof of Lemma 
\ref{lemma:psi_phi}.
}
\end{remark}

\begin{remark}
{ \rm Theorem \ref{th:stopping_time} occurs in different forms in the literature, see, e.g., 
Theorem 2.1 in \cite{paavo:88}. For convenience of the reader, we give a short self-contained proof of Theorem \ref{th:stopping_time} in Appendix A. The interpretation in terms of Bessel process is not really necessary, but this naturally leads to an interpretation in terms of Brownian excursions, further explored in Section 4 of \cite{gro:89}.
}
\end{remark}

Theorem \ref{th:stopping_time} should in principle be sufficient to derive the density $f_Z$ of (\ref{f_Z}), since, defining
$$
q(s)=\lim_{x\uparrow0}\frac{\partial}{\partial x}Q^{(s,x)}\left\{X_t<0,\,\forall t\ge s\right\}
=\lim_{x\uparrow0}\frac{\partial}{\partial x}Q^{(s,x)}\left\{\t_0=\infty\right\},
$$
we find:
$$
f_Z(s)=\tfrac12q(s)q(-s),
$$
following a line of reasoning similar to the derivation of (\ref{f_Z}) in \cite{chernoff:64} (in Chernoff's argument, which is based on a random local perturbation of the starting point $(s,x)$ and the ensuing convolution equation, the factor $1/2$ can be interpreted as the expectation of the squared maximum of the standard Brownian bridge). Moreover, by (ii) of Theorem \ref{th:stopping_time} we have, using inversion of the Laplace transform along the imaginary axis:
\begin{align}
\label{fundamental_relation}
&Q^{(s,x)}\left\{\t_0=\infty\right\}=1-Q^{(s,x)}\left\{\t_0<\infty\right\}
=1-\int_{t=s}^{\infty}Q^{(s,x)}\left\{\t_0\in dt\right\}\nonumber\\
&=1-\int_{t=s}^{\infty}e^{-\tfrac23\bigl(t^3-s^3\bigr)+2sx}
h_{x}(t-s)\,dt\nonumber\\
&=1-\frac{e^{2sx+\tfrac23s^3}}{2\pi}\
\int_{v=-\infty}^{\infty}\frac{\ai\bigl(2^{-1/3}iv-4^{1/3}x\bigr)}{\ai\bigl(2^{-1/3}iv\bigr)}\int_{t=0}^{\infty}e^{itv-\tfrac23(s+t)^3}\,dt\,dv.
\end{align}
So we would be done if we can deal with the properties of the integral in the last line.

However, the latter integral has some unpleasant properties. Taking the special case $s=0$, the integral reduces to:
\begin{align*}
&\frac1{2\pi}\int_{v=-\infty}^{\infty}\frac{\ai\bigl(2^{-1/3}iv-4^{1/3}x\bigr)}{\ai\bigl(2^{-1/3}iv\bigr)}\int_{t=0}^{\infty}e^{ivt-\tfrac23t^3}\,dt\,dv
=\tfrac12\int_{v=-\infty}^{\infty}\frac{\ai\bigl(iv-4^{1/3}x\bigr)}{\ai\bigl(iv\bigr)}\,\hi(iv)\,dv,
\end{align*}
where $\hi$ denotes Scorer's function $\hi$ (the transition of the coefficient $2/3$ of $t^3$ in the left-hand side to the coefficient $1/3$ of $t^3$ in the definition of Scorer's function was made by changes of variables in $t$ and $v$). But to treat the behavior of this integral (and its derivative with respect to $x$) as $x\uparrow0$, we can not take limits inside the integral sign, since we then end up with divergent integrals. For the function $\hi$ has the asymptotic expansion:
$$
\hi(z)\sim -\frac{1}{\pi z}\sum _{{k=0}}^{{\infty}}\frac{(3k)!}{k!(3z^{3})^{k}},\,\qquad\,|\text{ph}(-z)|<\tfrac23\pi-\d
$$
for $\d>0$ arbitrarily small, where $\text{ph}(-z)$ denotes the phase of $-z$, and if we put $x$ equal to zero inside the integral we are stuck with a non-integrable integrand, whereas in fact:
$$
\lim_{x\uparrow0}\tfrac12\int_{v=-\infty}^{\infty}\frac{\ai\bigl(iv-4^{1/3}x\bigr)}{\ai\bigl(iv\bigr)}\,\hi(iv)\,dv=\lim_{x\uparrow0}Q^{(0,x)}\left\{\t_0<\infty\right\}=1.
$$

For this reason part (ii) of Theorem \ref{th:stopping_time} was not directly used in the derivation of density $f_Z$ in \cite{gro:89}, but instead the limit
$$
Q^{(s,x)}\left\{\t_0=\infty\right\}=\lim_{t\to\infty}Q^{(s,x)}\left\{\t_0>t\right\}
$$
was computed by first determining the transition density
$$
Q^{(s,x)}\left\{X_t^{\partial}\in dy\right\},\,t>s,\,x,y<0,
$$
of the process $X_t^{\partial}$, which is the process $X_t$, killed when reaching $0$. The details of this computation were given in the appendix of \cite{gro:89}, giving the result:
\begin{equation}
\label{result89}
Q^{(s,x)}\left\{\t_0=\infty\right\}=\frac{e^{\tfrac23s^3+2sx}}{4^{1/3}}\int_{v=-\infty}^{\infty}e^{-isv}\frac{\ai(i\xi)\bi\bigl(i\xi-4^{1/3}x\bigr)-\ai\bigl(i\xi-4^{1/3}x\bigr)\bi\bigl(i\xi\bigr)}{\ai\bigl(i\xi\bigr)}\,dv,
\end{equation}
where $\xi=2^{-1/3}v$, see Theorem 3.1 of \cite{gro:89}. So by (\ref{fundamental_relation}) we must have the analytic relation
\begin{align}
\label{analytic_relation}
&\frac{e^{\tfrac23s^3+2sx}}{2\pi}\int_{v=-\infty}^{\infty}\frac{\ai\bigl(i\xi-4^{1/3}x\bigr)}{\ai\bigl(i\xi\bigr)}\int_{t=0}^{\infty}e^{itv-\tfrac23(s+t)^3}\,dt\,dv\nonumber\\
&=1-\frac{e^{\tfrac23s^3+2sx}}{4^{1/3}}\int_{v=-\infty}^{\infty}e^{-isv}\frac{\ai\bigl(i\xi\bigr)\bi\bigl(i\xi-4^{1/3}x\bigr)-\ai\bigl(i\xi-4^{1/3}x\bigr)\bi\bigl(i\xi\bigr)}{\ai\bigl(i\xi\bigr)}\,dv,\qquad\xi=2^{-1/3}v.
\end{align}

Conversely, if we can prove the analytic relation (\ref{analytic_relation}), we have an easy road to Theorem 3.1  of \cite{gro:89} and the derivation of the density $f_Z$. We call the function
$$
z\mapsto\frac1{\pi}\int_{t=s}^{\infty}e^{tz-\tfrac13t^3}\,dt
$$
an {\it incomplete Scorer function}, corresponding to the (complete) Scorer function
$$
z\mapsto\hi(z)=\frac1{\pi}\int_{t=0}^{\infty}e^{tz-\tfrac13t^3}\,dt.
$$

In the present paper we first prove in Section \ref{section:analytic_relation} relation (\ref{analytic_relation}) by showing that both integrals, as a function of the parameters $s$ and $x$, satisfy the same extended heat equation. Section \ref{section:max+locmax} discusses the derivation of the distribution of the maximum and location of maximum of one-sided or two-sided Brownian motion with a negative parabolic drift from these results. The appendices contain further details on the results.

\section{A parabolic partial differential equation and the analytic relation (\ref{analytic_relation})}
\label{section:analytic_relation}
\setcounter{equation}{0}

We start with the following lemma.

\begin{lemma}
\label{lemma:function1}
Let the function $f:\R\times(-\infty,0)\to \R$ be defined by
\begin{equation}
\label{an_representation1}
f(s,x)=\frac1{2\pi}\int_{v=-\infty}^{\infty}\frac{\ai\bigl(i\xi-4^{1/3}x\bigr)}{\ai\bigl(i\xi\bigr)}\int_{t=0}^{\infty}e^{itv-\tfrac23(s+t)^3}\,dt\,dv,\qquad \xi=2^{-1/3}v.
\end{equation}
Then $f$ satisfies the partial differential equation
\begin{equation}
\label{pde1}
\frac{\partial}{\partial s}f(s,x)
=-\tfrac12\frac{\partial^2}{\partial x^2}f(s,x)-2xf(s,x).
\end{equation}
Moreover $0\le f(s,x)\le e^{-2sx-\tfrac23s^3}$ and
\begin{equation}
\label{boundary_condition1}
\lim_{x\uparrow0}f(s,x)=e^{-\tfrac23s^3},\qquad \lim_{s\to\infty}f(s,x)=0,\quad\,x<0,\qquad\lim_{x\to-\infty}e^{2sx}f(s,x)=0,\quad\, s\in\R.
\end{equation}
\end{lemma}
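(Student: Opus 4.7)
My plan is to verify the PDE by differentiating under the $v$-integral, using the Airy equation for $\partial_x^2$ and integration by parts in $t$ for $\partial_s$; the pointwise bound and the three boundary conditions then follow from the probabilistic identification of $f$ furnished by Theorem \ref{th:stopping_time}(ii).

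For the PDE, with $\xi=2^{-1/3}v$, the Airy equation $\ai''(z)=z\ai(z)$ gives
\begin{equation*}
\partial_x^2\frac{\ai(i\xi-4^{1/3}x)}{\ai(i\xi)}=4^{2/3}(i\xi-4^{1/3}x)\frac{\ai(i\xi-4^{1/3}x)}{\ai(i\xi)}=(2iv-4x)\frac{\ai(i\xi-4^{1/3}x)}{\ai(i\xi)}.
\end{equation*}
Writing $\tfrac{d}{dt}e^{itv-\frac23(s+t)^3}=(iv-2(s+t)^2)e^{itv-\frac23(s+t)^3}$ and integrating from $t=0$ to $t=\infty$ yields, together with differentiating the $t$-integral in $s$,
\begin{equation*}
\partial_s\int_0^\infty e^{itv-\frac23(s+t)^3}\,dt=-iv\int_0^\infty e^{itv-\frac23(s+t)^3}\,dt-e^{-\frac23 s^3}.
\end{equation*}
Substituting both expressions into $f$ and combining yields
$$\partial_s f+\tfrac12\partial_x^2 f+2xf=-e^{-\frac23 s^3}\cdot h_x(0^+),$$
where $h_x(0^+)$ is given by (\ref{def_h_x}) at $t=0$. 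This vanishes for $x<0$: close the contour in the lower half $v$-plane (holomorphic there, since the zeros of $\ai(i\xi)$ lie on the positive imaginary $v$-axis), with the large semicircle contributing nothing thanks to the decay of the Airy ratio described below.

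For the bound and boundary conditions, applying Fubini to the integral representation (\ref{def_h_x}) of $h_x$ and using part (ii) of Theorem \ref{th:stopping_time} gives
$$f(s,x)=\int_0^\infty e^{-\frac23(s+t)^3}h_x(t)\,dt=e^{-\frac23 s^3-2sx}\,Q^{(s,x)}\{\tau_0<\infty\}.$$
This identification immediately yields $0\le f(s,x)\le e^{-\frac23 s^3-2sx}$, $\lim_{x\uparrow 0}f(s,x)=e^{-\frac23 s^3}$ (since $Q^{(s,x)}\{\tau_0<\infty\}\to 1$ by path continuity), and $\lim_{s\to\infty}f(s,x)=0$ for $x<0$. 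The last condition $\lim_{x\to-\infty}e^{2sx}f(s,x)=\lim_{x\to-\infty}e^{-\frac23 s^3}Q^{(s,x)}\{\tau_0<\infty\}=0$ follows by dominated convergence applied to the Brownian representation in part (i) of Theorem \ref{th:stopping_time}, using the pointwise decay of the Brownian first-passage density $\psi_x(u)$ as $x\to-\infty$.

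The principal obstacle is the analytic housekeeping around the $v$-integral: legitimizing differentiation under the integral and confirming $\int\frac{\ai(i\xi-4^{1/3}x)}{\ai(i\xi)}\,dv=0$ both hinge on a uniform decay estimate of the form $|\ai(i\xi-4^{1/3}x)/\ai(i\xi)|\lesssim \exp(-c(-x)^{1/2}|v|^{1/2})$ as $|v|\to\infty$. This in turn follows from the classical asymptotics $\ai(z)\sim\tfrac{1}{2\sqrt{\pi}}z^{-1/4}e^{-\frac23 z^{3/2}}$ in $|\arg z|<\pi$ and the expansion $(z+a)^{3/2}-z^{3/2}\approx \tfrac{3a}{2}z^{1/2}$ for $z=i\xi$ with $a=-4^{1/3}x>0$, so $\Re(z^{1/2})=|\xi|^{1/2}/\sqrt{2}$ on the real $v$-axis.
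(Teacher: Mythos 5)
Your proof is correct, but for the PDE it takes a genuinely different route from the paper. The paper derives \eqref{pde1} probabilistically: it asserts the backward Kolmogorov equation \eqref{eq:generatorEqn} for $u(s,x)=Q^{(s,x)}\{\tau_0<\infty\}$ (a time-inhomogeneous diffusion with generator $\tfrac12\partial_x^2-2s\partial_x$), then transfers it to $f$ via the exponential tilt \eqref{eq:expTilt} by ``routine calculus.'' You instead verify \eqref{pde1} directly on the integral representation \eqref{an_representation1}, using the Airy ODE for $\partial_x^2$ and integration by parts in $t$ for $\partial_s$; this is exactly the style the paper uses for $g$ in Lemma \ref{lemma:function2}, and it is arguably more self-contained, since it does not appeal to the (unproved) backward equation or to a priori smoothness of $u$. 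The price is the extra identity $\int_{-\infty}^{\infty}\ai\bigl(i\xi-4^{1/3}x\bigr)/\ai(i\xi)\,dv=0$ for $x<0$ (your $h_x(0^+)=0$), which your contour argument handles correctly: the poles sit on the positive imaginary $v$-axis and the ratio decays on the closed lower half-plane. For the bound and the three boundary conditions you use the same probabilistic identification $f(s,x)=e^{-2sx-\frac23 s^3}Q^{(s,x)}\{\tau_0<\infty\}$ (i.e.\ relation \eqref{fundamental_relation}) as the paper, and those deductions are all sound. One small slip: the uniform decay of the Airy ratio is $\exp\bigl(-c(-x)\,|v|^{1/2}\bigr)$, linear in $-x$ (the exponent is $\approx a\,\mathrm{Re}\,z^{1/2}$ with $a=-4^{1/3}x$), not $(-x)^{1/2}$; since you only need this for fixed $x<0$, nothing in the argument is affected.
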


\begin{proof} The proof follows from the following observations.\\
\textbf{First Observation:}
Let $\tau_{A}=\inf\{t:X_{t}=A\}$, and 
define $u (s,x)=u(s,x;A)= Q^{(s,x)}\{\tau_{A}<\infty \}$. The process
$\{X_{t}:t\ge s\}$ under $Q^{(s,x)}$ is a diffusion process with a time-dependent
generator, obtained by subtracting $2t(d/dx)$ from the generator of
standard Brownian motion. Consequently, standard arguments from Markov
process theory yield:
\begin{equation}\label{eq:generatorEqn}
	\frac{\partial u}{\partial s} = -\frac{1}{2}
	\frac{\partial^{2}u}{\partial x^{2}} +2s \frac{\partial
	u}{\partial x}
\end{equation}
in the region $x<A$. 

\noindent \textbf{Second Observation:}
Let $u (s,x)$ and $f (s,x)$ be functions satisfying the relation 
\begin{equation}\label{eq:expTilt}
	f (s,x)=e^{-2sx-\tfrac23s^3}u (s,x).
\end{equation}
Then $u$ satisfies the PDE
\eqref{eq:generatorEqn} if and only if $f$ satisfies the PDE (\ref{pde1}),
as can be seen by routine calculus.\\
\noindent \textbf{Third Observation:} Relation (\ref{fundamental_relation})
(which follows from Theorem \ref{th:stopping_time} by the Laplace inversion (\ref{def_h_x})) shows that the function $u
(s,x)=Q^{(s,x)}\{\tau_{0}<\infty \}$ is related to the function $f
(s,x)$ of the lemma by the transformation \eqref{eq:expTilt}
above. Since $u$ satisfies \eqref{eq:generatorEqn}, it now follows
immediately that $f$ satisfies (\ref{pde1}).\\
The boundary conditions follow immediately from the probabilistic interpretation of the function $u$.
\end{proof}

\vspace{0.3cm}
It turns out that the right-hand side of (\ref{analytic_relation}) has a more convenient representation, which generalizes relation (2.3) of Lemma 2.2 in \cite{nico_piet:11} (see also Remark 2.1 in \cite{nico_piet:11} on the equivalent relation (5.10) in \cite{svante:10}).

\begin{lemma}
\label{lemma:function2_rep}
Let the function $g:\R\times(-\infty,0]\to \R$ be defined by
\begin{equation}
\label{an_representation2}
g(s,x)=\frac1{4^{1/3}}\int_{v=-\infty}^{\infty}e^{-isv}\frac{\ai(i\xi)\bi\bigl(i\xi-4^{1/3}x\bigr)-\ai\bigl(i\xi-4^{1/3}x\bigr)\bi(i\xi)}{\ai(i\xi)}\,dv,\qquad \xi=2^{-1/3}v.
\end{equation}
Then $g$ has the alternative representation
\begin{equation}
\label{function2_rep}
g(s,x)=\frac{e^{-2sx}}{2\pi}\int_{u=-\infty}^{\infty}\frac{\int_{y=0}^{-4^{1/3}x}e^{-2^{1/3}s(iu+y)}\ai(iu+y)\,dy}{\ai(iu)^2}\,du.
\end{equation}
\end{lemma}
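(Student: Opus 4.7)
The strategy is to convert the Airy combination in the numerator of the integrand of (\ref{an_representation2}) into a line integral via the Wronskian identity $W[\ai,\bi]=1/\pi$, and then to use a contour shift in the complex plane to recover the form (\ref{function2_rep}). Since $(\bi/\ai)'(t)=1/(\pi\ai(t)^2)$, multiplying through by $\ai(z)\ai(w)$ and integrating from $z$ to $w$ along any path that avoids the zeros of $\ai$ gives
\[
\ai(z)\bi(w)-\ai(w)\bi(z)=\frac{\ai(z)\ai(w)}{\pi}\int_z^w\frac{dt}{\ai(t)^2}.
\]
Applied with $z=i\xi$, $w=i\xi-4^{1/3}x$, and parameterized by $t=i\xi+r$, $r\in[0,-4^{1/3}x]$, this rewrites the integrand of (\ref{an_representation2}) in terms of $\ai(i\xi-4^{1/3}x)/\pi$ times $\int_0^{-4^{1/3}x}dr/\ai(i\xi+r)^2$.

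After substituting $u=\xi=2^{-1/3}v$ and swapping the $u$- and $r$-integrations by Fubini, the pivotal step is a contour shift for each fixed $r$: translate the $u$-contour from $\mathbb R$ upward to $\mathbb R+ir$. On the new contour $iu+r$ becomes purely imaginary, so the denominator collapses to $\ai$ of an imaginary argument squared, while $e^{-2^{1/3}isu}$ picks up the multiplier $e^{2^{1/3}sr}$ and the surviving $\ai(i\xi-4^{1/3}x)$ becomes $\ai(iu'+Y-r)$ with $Y:=-4^{1/3}x$. A final substitution $y=Y-r$ converts this into $\ai(iu'+y)$, combines the $s$-dependent exponentials into $e^{-2^{1/3}s(iu'+y)}$, and isolates the factor $\exp(2^{1/3}sY)=e^{-2sx}$ in front of the integral. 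Reordering the two remaining integrations produces (\ref{function2_rep}), with the overall numerical constant dropping out of the same bookkeeping.

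The main obstacle is the rigorous justification of the contour shift. One must check (a) that no singularities of the integrand lie in the strip $0\le\mathrm{Im}(u)\le r$, and (b) that the integrand decays uniformly on the vertical sides of the shifting rectangle. Point (a) holds because the zeros of $\ai(iu+r)$ sit at $u=i(r+|a_k|)$, where $a_k<0$ are the real zeros of $\ai$, and these all lie strictly above the target contour at height $r$. Point (b) follows from the standard Airy asymptotics $|\ai(iu)|\sim(2\sqrt{\pi}\,|u|^{1/4})^{-1}\exp(\tfrac{\sqrt 2}{3}|u|^{3/2})$ as $|\mathrm{Re}(u)|\to\infty$ uniformly in the strip, which gives super-exponential decay of $1/\ai(iu+r)^2$ and hence of the full integrand on the vertical connectors.
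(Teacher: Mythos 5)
Your argument is correct in substance but takes a genuinely different route from the paper's. The paper rescales $g$ to $\tilde g$, derives the first-order ODE (\ref{DE_g}) for $x\mapsto\tilde g(s,x)$ in Appendix B --- by decomposing $\bi$ into the rotated solutions $\ai(e^{\mp 2\pi i/3}\,\cdot\,)$, integrating by parts in $u$, and invoking the identity (\ref{h(u)}) --- and then solves that ODE with initial value $\tilde g(s,0)=0$. You instead linearize the $\bi$-dependence at the outset via the antiderivative form of the Wronskian, $\ai(z)\bi(w)-\ai(w)\bi(z)=\pi^{-1}\ai(z)\ai(w)\int_z^w\ai(t)^{-2}\,dt$, and then perform a single contour shift $u\mapsto u+ir$. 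The two proofs are close cousins: the paper's identity (\ref{h(u)}) is itself the Wronskian of $\ai(z)$ and $\ai(e^{-2\pi i/3}z)$ in disguise, and differentiating your intermediate representation in $x$ reproduces exactly (\ref{DE_g}), the contour shift playing the role of the integrating factor. Your version is shorter, avoids the decomposition of $\bi$ and the boundary terms of the integration by parts, and your justification of the shift is essentially right: the zeros of $u\mapsto\ai(iu+r)$ sit at $u=i(r+|a_k|)$, strictly above the target contour, and $1/|\ai(iu+r)|^2$ decays like $\exp(-\tfrac{2\sqrt2}{3}|\mathrm{Re}\,u|^{3/2})$ uniformly for $r$ in a compact set. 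One small point to add there: the numerator $\ai(iu+Y)$ \emph{grows} at half that rate on the vertical connectors, so the "hence" needs the observation that the denominator's decay is twice the numerator's growth.

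The one thing you must not gloss over is the constant: the bookkeeping does \emph{not} make it come out as stated. Starting from (\ref{an_representation2}) with prefactor $4^{-1/3}$, the substitution $v=2^{1/3}u$ contributes $2^{1/3}$ and the Wronskian contributes $\pi^{-1}$, so your computation lands on $2^{2/3}$ times the right-hand side of (\ref{function2_rep}). A check at $s=0$ and $x\uparrow0$, using $\ai(z)\bi(z+Y)-\ai(z+Y)\bi(z)=Y/\pi+O(Y^2)$, confirms that (\ref{an_representation2}) and (\ref{function2_rep}) as printed disagree by exactly this factor. This is not a defect of your method: the paper's own proof loses the same factor $2^{2/3}$ in the unexplained transfer from $\tilde g$ back to $g$, since $g(s,x)=2^{2/3}\,\tilde g\bigl(2^{1/3}s,-4^{1/3}x\bigr)$ while (\ref{function2_rep}) is stated as if the transfer were constant-free. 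The representation (\ref{function2_rep}) is the one consistent with everything downstream (the limit in Lemma \ref{lemma:Laplace_transform} and the boundary condition (\ref{boundary_condition2}), formula (\ref{k(s,0)}), and Chernoff's density), which indicates that the prefactor in (\ref{an_representation2}) --- and hence in (\ref{result89}) and (\ref{analytic_relation}) --- should be read as $2^{-4/3}$ rather than $4^{-1/3}$. In any case, carry the constants explicitly rather than asserting that they drop out; here they expose a normalization inconsistency in the statement you were asked to prove.
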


\begin{proof}
By the definition of the function $g$, we have:
\begin{align*}
g(s,x)=2^{-1/3}\int_{-\infty}^{\infty}e^{-2^{1/3}isu}\frac{\ai(iu)\bi\bigl(iu-4^{1/3}x\bigr)-\bi(iu)\ai\bigl(iu-4^{1/3}x\bigr)}{\ai(iu)}\,du.
\end{align*}
For simplicity of notation, we consider instead:
\begin{align*}
\tilde g(s,x)&=\tfrac12\int_{-\infty}^{\infty}e^{-isu}\frac{\ai(iu)\bi(iu+x)-\bi(iu)\ai(iu+x)}{\ai(iu)}\,du
,\quad s\in\R,\,x>0.
\end{align*}
It is shown in Section \ref{section:appendixB} that the function $x\mapsto \tilde g(s,x)$ satisfies the first order differential equation:
\begin{align}
\label{DE_g}
\frac{\partial}{\partial x}\tilde g(s,x)=s\tilde g(s,x)+\frac1{2\pi}\int_{u=-\infty}^{\infty}e^{-isu}\frac{\ai(iu+x)}{\ai(iu)^2}\,du.
\end{align}
So if $\tilde g(s,0)=0$, the solution is given by:
$$
\tilde g(s,x)=\frac{e^{sx}}{2\pi}\int_{u=-\infty}^{\infty}\frac{\int_{y=0}^x e^{-s(iu+y)}\ai(iu+y)\,dy}{\ai(iu)^2}\,du.
$$
Transferring this result to the function $g$ and using $g(s,0)=0$, we get that the corresponding linear differential equation for $x\mapsto g(s,x)$ has the solution given by (\ref{function2_rep}).
\end{proof}

\begin{lemma}
\label{lemma:function2}
Let the function $g:\R\times(-\infty,0]\to \R$ be defined as in Lemma \ref{lemma:function2_rep}.
Then $g$ satisfies the partial differential equation
\begin{equation}
\label{pde2}
\frac{\partial}{\partial s}g(s,x)
=-\tfrac12\frac{\partial^2}{\partial x^2}g(s,x)-2xg(s,x).
\end{equation}
Moreover:
\begin{equation}
\label{boundary_condition2}
\lim_{x\uparrow0}g(s,x)=0,\qquad\lim_{x\to-\infty}e^{2sx}g(s,x)=e^{-\tfrac23s^3},\qquad s>0.
\end{equation}
\end{lemma}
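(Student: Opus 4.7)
My plan is to verify the three assertions in turn---the PDE \eqref{pde2}, the boundary $g(s,0)=0$, and the asymptotic as $x\to-\infty$---using the defining representation \eqref{an_representation2} for the first two and the alternative representation \eqref{function2_rep} from Lemma \ref{lemma:function2_rep} for the third.

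For the PDE, I differentiate \eqref{an_representation2} under the integral sign. Writing the $x$-dependent numerator as $N(x,v)=\ai(i\xi)\bi(i\xi-4^{1/3}x)-\bi(i\xi)\ai(i\xi-4^{1/3}x)$, both $\ai$ and $\bi$ solve $F''(z)=zF(z)$, so by the chain rule $\partial_x^2 F(i\xi-4^{1/3}x)=4^{2/3}(i\xi-4^{1/3}x)F(i\xi-4^{1/3}x)$. Using $\xi=2^{-1/3}v$ together with $4^{2/3}=2^{4/3}$ yields $4^{2/3}i\xi=2iv$ and $\tfrac12\cdot 4^{2/3}\cdot 4^{1/3}=2$, so
\[
\Bigl(-\tfrac12\partial_x^2-2x\Bigr) F(i\xi-4^{1/3}x)=-iv\,F(i\xi-4^{1/3}x),
\]
and by linearity the same identity is valid for $N(x,v)$. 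Since $\partial_s e^{-isv}=-ive^{-isv}$, equation \eqref{pde2} follows at once. The boundary value $g(s,0)=0$ is immediate from \eqref{an_representation2} because $N(0,v)\equiv0$.

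For the limit at $-\infty$ I use \eqref{function2_rep} and write
\[
e^{2sx}g(s,x)=\frac{1}{2\pi}\int_{-\infty}^{\infty}\frac{1}{\ai(iu)^2}\int_{0}^{-4^{1/3}x}e^{-2^{1/3}s(iu+y)}\ai(iu+y)\,dy\,du.
\]
For $s>0$ the inner integral converges absolutely as $-4^{1/3}x\to\infty$, and the resulting integrand is controlled uniformly in $x$ by the superexponential decay of $1/\ai(iu)^2$ along the imaginary axis, so dominated convergence lets me pass to the limit. What remains is to prove the identity
\[
\frac{1}{2\pi}\int_{-\infty}^{\infty}\frac{1}{\ai(iu)^2}\int_{0}^{\infty}e^{-2^{1/3}s(iu+y)}\ai(iu+y)\,dy\,du = e^{-\tfrac23 s^3}.
\]
My approach here is to substitute $z=iu+y$, turning the inner integral into a contour integral $\int_{iu}^{iu+\infty}e^{-2^{1/3}sz}\ai(z)\,dz$ along a horizontal ray, then insert the representation $\ai(z)=(2\pi)^{-1}\int e^{i(zt+t^3/3)}\,dt$, swap orders of integration, and evaluate the $u$-integral by a contour shift, finally recognizing the residual $t$-integral as $e^{-\tfrac23 s^3}$ via a Scorer-type reduction.

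The main obstacle is precisely this third step. The representation \eqref{an_representation2} is useless for $x\to-\infty$---the formal pointwise limit of the integrand leads to a divergent integral---so one has to work with \eqref{function2_rep} and carefully justify both the contour deformation and the interchange of integration orders in the presence of the oscillating factor $\ai(iu)$. The PDE verification and the $x=0$ boundary condition, by contrast, are essentially formal consequences of the Airy equation and of the vanishing Wronskian-like combination in the numerator.
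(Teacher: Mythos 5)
Your verification of the PDE and of $\lim_{x\uparrow0}g(s,x)=0$ is correct and is exactly the paper's argument: the Airy equation gives $\bigl(-\tfrac12\partial_x^2-2x\bigr)N(x,v)=-iv\,N(x,v)$ for the numerator of \eqref{an_representation2}, matching $\partial_s e^{-isv}=-ive^{-isv}$, and the numerator vanishes identically at $x=0$. Switching to the representation \eqref{function2_rep} for the $x\to-\infty$ limit is also the right move, and the dominated-convergence step is fine at the level of detail required.

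The gap is the remaining identity
\[
\frac{1}{2\pi}\int_{-\infty}^{\infty}\frac{\int_{0}^{\infty}e^{-2^{1/3}s(iu+y)}\ai(iu+y)\,dy}{\ai(iu)^2}\,du=e^{-\tfrac23 s^3},
\]
which is the entire content of the second boundary condition and which you only sketch. The route you propose --- insert the Fourier representation of $\ai$, swap orders, and ``evaluate the $u$-integral by a contour shift'' --- stalls at that $u$-integral: after the swap you are left with $\int e^{(it-2^{1/3}s)iu}\ai(iu)^{-2}\,du$, a Fourier-type transform of $1/\ai(iu)^2$ with no elementary closed form (and shifting the contour only trades it for an infinite sum of residues at the zeros of $\ai$), so there is no clean ``residual $t$-integral'' left to reduce. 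The paper (Lemma \ref{lemma:Laplace_transform}) avoids this entirely: calling the left-hand side $p(s)$, it differentiates under the integral sign, integrates by parts in $y$ using $\ai''(z)=z\ai(z)$, and obtains $p'(s)=-2s^2p(s)+q(s)$ with $q(s)\equiv 0$ (the two boundary terms cancel after a further integration by parts in $u$). Hence $p(s)=ce^{-\tfrac23 s^3}$, and the constant is fixed at $s=0$ using $\int_0^\infty\ai(y)\,dy=\tfrac13$ together with the nontrivial normalization $\frac{1}{2\pi}\int\ai(iu)^{-2}\,du=1$, itself proved in Appendix D via the explicit antiderivative $e^{-i\pi/6}\ai\bigl(e^{-i\pi/6}u\bigr)/\bigl(i\ai(iu)\bigr)$ of $1/\bigl(2\pi\ai(iu)^2\bigr)$. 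You would need to adopt this ODE-plus-normalization device (or supply a genuinely workable substitute); as written, the third assertion of the lemma is not established.
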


\begin{proof}
We have:
\begin{align*}
&\frac{\partial^2}{\partial x^2}\frac{\ai(i\xi)\bi\bigl(i\xi-4^{1/3}x\bigr)-\ai\bigl(i\xi-4^{1/3}x\bigr)\bi(i\xi)}{\ai(i\xi)}\\
&=2(iv-2x)\frac{\ai(i\xi)\bi\bigl(i\xi-4^{1/3}x\bigr)-\ai\bigl(i\xi-4^{1/3}x\bigr)\bi(i\xi)}{\ai(i\xi)}\,,\qquad \xi=2^{-1/3}v.
\end{align*}
We also have:
$$
\frac{\partial}{\partial s}g(s,x)=-\frac1{4^{1/3}}\int_{v=-\infty}^{\infty}ive^{-isv}\frac{\ai(i\xi)\bi\bigl(i\xi-4^{1/3}x\bigr)-\ai\bigl(i\xi-4^{1/3}x\bigr)\bi(i\xi)}{\ai(i\xi)}\,dv\,.
$$
This yields (\ref{pde2}).
It is clear from the definition (\ref{an_representation2}) that $\lim_{x\uparrow0}g(s,x)=0$ for all $s\in\R$. A stronger version of the second part of (\ref{boundary_condition2}) is proved in Section \ref{section:appendixC}.
\end{proof}

The preceding two lemmas  give the desired result (\ref{analytic_relation}).

\begin{theorem}
\label{th:solution_relation}
\begin{enumerate}
\item[(i)]
Let the functions $f$ and $g$ be defined as in Lemmas \ref{lemma:function1} to \ref{lemma:function2}. Then we have:
$$
f(s,x)=e^{-2sx-\tfrac23s^3}-g(s,x),\quad s\in\R,\quad x\le0,
$$
where $f(s,0)$ is defined by taking the limit of $f(s,x)$, as $x\uparrow0$.
\item[(ii)] Let, for $s\in\R$ and $x\le0$, $\{X_t:t\in\R\}=\{W(t)-t^2:t\in\R\}$ be Brownian motion with a negative parabolic drift, starting at $x$ at time $s$, with corresponding probability measure $Q^{(s,x)}$. Then
\begin{equation}
\label{rep1}
Q^{(s,x)}\{\t_0<\infty\}=e^{2sx+\tfrac23s^3}f(s,x),
\end{equation}
and
\begin{equation}
\label{rep2}
Q^{(s,x)}\{\t_0=\infty\}=e^{2sx+\tfrac23s^3}g(s,x)=\frac{e^{\tfrac23s^3}}{2\pi}\int_{u=-\infty}^{\infty}\frac{\int_{y=0}^{-4^{1/3}x}e^{-2^{1/3}s(iu+y)}\ai(iu+y)\,dy}{\ai(iu)^2}\,du.
\end{equation}
\end{enumerate}

\end{theorem}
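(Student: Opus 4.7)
The plan is to deduce part (i) from the fact that $f+g$ and $(s,x)\mapsto e^{-2sx-\tfrac23 s^3}$ both solve the same PDE (\ref{pde1}) with matching boundary data, and then to read off part (ii) by combining (\ref{fundamental_relation}) with part (i) and Lemma \ref{lemma:function2_rep}.

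For the first step, direct differentiation shows that $(s,x)\mapsto e^{-2sx-\tfrac23 s^3}$ satisfies (\ref{pde1}), so by Lemmas \ref{lemma:function1} and \ref{lemma:function2} the difference
\begin{equation*}
h(s,x):=e^{-2sx-\tfrac23 s^3}-f(s,x)-g(s,x)
\end{equation*}
satisfies (\ref{pde1}) as well. Applying the exponential tilt from the Second Observation in the proof of Lemma \ref{lemma:function1}, set $u_f:=e^{2sx+\tfrac23 s^3}f$, $u_g:=e^{2sx+\tfrac23 s^3}g$, and $w:=e^{2sx+\tfrac23 s^3}h=1-u_f-u_g$. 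Then $w$ satisfies (\ref{eq:generatorEqn}). The boundary data in (\ref{boundary_condition1}) translate to $u_f(s,0)=1$ and $u_f(s,x)\to 0$ as $x\to-\infty$, while (\ref{boundary_condition2}) gives $u_g(s,0)=0$ and, for $s>0$, $u_g(s,x)\to 1$ as $x\to-\infty$. Hence $w$ vanishes on both pieces of the lateral boundary of the strip $\R\times(-\infty,0]$, and the bound $0\le u_f\le 1$ from Lemma \ref{lemma:function1}, combined with an asymptotic analysis of the representation (\ref{function2_rep}), yields the analogous bound $0\le u_g\le 1$ together with $w\to 0$ as $s\to\infty$.

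The last step is the application of the parabolic maximum principle: after the time reversal $\sigma=-s$, equation (\ref{eq:generatorEqn}) becomes a standard forward heat equation with a drift that is locally bounded in time, and $w$ is then a bounded solution on $(-\infty,\sigma_0]\times(-\infty,0]$ vanishing on its parabolic boundary. A truncation argument on large rectangles (approximating the infinite lateral and initial boundaries by finite ones and letting the truncation parameters tend to infinity) forces $w\equiv 0$ for $s>0$, and by analyticity of $f$ and $g$ in $s$ the identity extends to all $s\in\R$, proving part (i). Part (ii) is then immediate: the equality (\ref{rep1}) is precisely what was established in (\ref{fundamental_relation}) via Laplace inversion of Theorem \ref{th:stopping_time}; substituting $g=e^{-2sx-\tfrac23 s^3}-f$ from part (i) yields the first equality in (\ref{rep2}), and the second equality in (\ref{rep2}) is the representation (\ref{function2_rep}) of Lemma \ref{lemma:function2_rep}. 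The main obstacle will be justifying the uniqueness step on the unbounded strip: one must verify that $u_g$ is bounded and has the stated decay uniformly in $s$, which requires careful control of the Airy ratio $\ai(iu+y)/\ai(iu)^2$ in (\ref{function2_rep}) as $|u|\to\infty$, since unlike for $u_f$ there is no a~priori probabilistic bound available before the theorem itself is proved.
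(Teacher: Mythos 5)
Your proposal follows essentially the same route as the paper: both identify $e^{-2sx-\tfrac23 s^3}-f-g$ as a solution of the parabolic equation (\ref{pde1}) that vanishes on the boundary of the strip $\R\times(-\infty,0]$ (using the limits from Lemmas \ref{lemma:function1} and \ref{lemma:function2} and Appendix C) and conclude by a maximum principle, after which part (ii) is read off from (\ref{fundamental_relation}) and Lemma \ref{lemma:function2_rep}. The only differences are cosmetic: the paper applies the maximum principle directly to (\ref{pde1}) on the rectangles $R_c=\{s\ge c,\,x\le 0\}$, exploiting the favorable sign of the zeroth-order coefficient $-2x$ for $x<0$ (so no time reversal, tilting, or analyticity extension to $s\le 0$ is needed), and the uniformity issue you flag at the end (decay as $x\to-\infty$ uniformly in $s$) is present, and left equally implicit, in the paper's own argument.
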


\begin{proof} (i). The function
$$
(s,x)\mapsto e^{-2sx-\tfrac23s^3},\quad\,(s,x)\in\R^2,
$$
satisfies the same partial differential equation as the functions $f$ and $g $ of Lemmas \ref{lemma:function1} and \ref{lemma:function2}. We have to show:
\begin{equation}
\label{maximum_princ}
h(s,x)\stackrel{\text{\small def}}=f(s,x)+g(s,x)-e^{-2sx-\tfrac23s^3}=0,\qquad s\in\R,\qquad x\le0,
\end{equation}
defining $f(s,0)$ and $g(s,0)$ by the limits of $f(s,x)$ and $g(s,x)$ as $x\uparrow0$, respectively. To show that (\ref{maximum_princ}) holds, we use the maximum principle.

First of all, (\ref{maximum_princ}) holds for all $s$ if $x=0$ by Lemmas \ref{lemma:function1} and \ref{lemma:function2}.
It is shown in Section \ref{section:appendixC} that also
\begin{equation}
\label{max_bound1}
\lim_{x\to-\infty}h(s,x)=0,\qquad\forall s\in\R,
\end{equation}
and
\begin{equation}
\label{max_bound2}
\lim_{s\to\infty}h(s,x)=0,\qquad\forall x<0.
\end{equation}
We now consider an infinite rectangle $R_c=\{(s,x) : s\ge c, x \le 0\}$, for some $c\in\R$. Suppose that $h$ attains a strictly positive maximum over $R_c$ at an interior point $(s_0,x_0)\in R_c^0$. Then $\partial_1h(s_0,x_0)=0$, denoting the derivative w.r.t.\ the $i$th argument by $\partial_i$. Hence, since $h$ satisfies the same partial differential equation as $f$ and $g$, we get:
$$
0=\partial_1h(s_0,x_0)=-\tfrac12\partial_2^2h(s_0,x_0)-2x_0h(s_0,x_0),
$$
implying
$$
\partial_2^2h(s_0,x_0)=-4x_0h(s_0,x_0)>0,
$$
since $x_0<0$. But this contradicts the assumption that $h$ attains its maximum at $(s_0,x_0)$. Similarly, if $h$ attains a strictly negative minimum at an interior point $(s_0,x_0)\in R_c^0$, we would get $\partial_2^2h(s_0,x_0)<0$, again giving a contradiction. So a strictly positive maximum or strictly negative minimum over $R_c$ can only be attained on the line $s=c$. Suppose that a strictly positive maximum is attained at the point $(c,x_0)$, where $x_0<0$. Then we must have:
$\partial_1 h(c,x_0)\le0$, implying by the partial differential equation for $h$:
$$
\partial_2^2 h(c,x_0)\ge -4x_0h(c,x_0)>0,
$$
contradicting the assumption that $h$ attains its maximum on the line $s=c$ at the point $(c,x_0)$.

In a similar way we get a contradiction if we assume that $h$ attains a strictly negative minimum on the line $s=c$. So the conclusion is that $h$ is identically zero on $R_c$. Since the argument holds for all $c\in\R$, we get that the function $h$ is identically zero on $\R\times(-\infty,0]$.\\
(ii) This follows from (\ref{fundamental_relation}), Lemmas \ref{lemma:function1} to \ref{lemma:function2}, and (i).
\end{proof}

\section{The distribution of the maximum and location of maximum of one-sided and two-sided Brownian motion with parabolic drift.}
\label{section:max+locmax}
\setcounter{equation}{0}
Let $M$ denote the maximum of the process $\{X_t=W(t)-t^2:t\ge s\}$, starting at $x$ at time $s$, with corresponding probability measure $Q^{(s,x)}$. Moreover, let, with a slight abuse of notation, $\t_M$ denote the location of the maximum $M$ of this process. The following theorem gives the joint distribution of $\t_M$ and $M$ under $Q^{(s,x)}$.

\begin{theorem}
\label{th:max+locmax}
Let the function $k$ be defined by
\begin{equation}
\label{def_k}
k(s,x)=\frac{\partial}{\partial x}Q^{(s,x)}\left\{\t_0<\infty\right\},
\end{equation}
where $Q^{(s,x)}$ is the probability measure, corresponding to the process $\{X_t=W(t)-t^2:t\ge s\}$, starting at $x$ at time $s$. Moreover, let  $k(t,0)=\lim_{x\uparrow0}k(t,x)$ for all $t\in\R$.
Then
\begin{enumerate}
\item[(i)]
\begin{equation}
\label{stopping_time_dist}
Q^{(s,x)}\left\{\t_0<\infty\right\}
=e^{\tfrac23s^3+2sx}f(s,x)=1-e^{\tfrac23s^3+2sx}g(s,x)
\end{equation}
where the functions $f$ and $g$ are defined as in Lemma \ref{lemma:function1} and Lemma \ref{lemma:function2}, respectively, and 
\begin{equation}
\label{k(s,0)}
k(s,0)=\lim_{x\downarrow0}\frac{\partial}{\partial x}Q^{(s,x)}\left\{\t_0<\infty\right\}=\frac{e^{\tfrac23s^3}}{4^{1/3}\pi}\int_{v=-\infty}^{\infty}\frac{e^{-isv}}{\ai(i2^{-1/3}v)}\,dv.
\end{equation}
\item[(ii)]
The function $a\mapsto k(s,x-a)$ is the density of the maximum $M$ at $a>x$ under the probability measure $Q^{(s,x)}$.
\item[(iii)]
The joint density of $\t_M$ and $M$ is given by:
\begin{align}
\label{joint1}
f_{(\t_M,M)}(t,a)
&=e^{-\tfrac23\bigl(t^3-s^3\bigr)+2s(x-a)}h_{x-a}(t-s)k(t,0)\nonumber\\
&=\frac{e^{-\tfrac23s^3+2s(x-a)}h_{x-a}(t-s)}{\pi}\int_{v=-\infty}^{\infty}\frac{e^{-itv}}{\ai(i\xi)}\,dv,\quad a>x,\quad t>s,
\end{align}
where $h_{x-a}$ is defined as in part (ii) of Theorem \ref{th:stopping_time}, that is:
$$
h_{x-a}(u)=\frac1{2\pi}\int_{v=-\infty}^{\infty}e^{iuv}
\frac{\ai\bigl(i2^{-1/3}v-4^{1/3}(x-a)\bigr)}{\ai\bigl(i2^{-1/3}v\bigr)}\,dv.
$$
\end{enumerate}
\end{theorem}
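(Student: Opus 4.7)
The plan is to handle the three parts in sequence.

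For part (i), the identity $Q^{(s,x)}\{\t_0<\infty\}=e^{\frac{2}{3}s^{3}+2sx}f(s,x)=1-e^{\frac{2}{3}s^{3}+2sx}g(s,x)$ is an immediate rewriting of Theorem~\ref{th:solution_relation}(ii) combined with $Q\{\t_0<\infty\}=1-Q\{\t_0=\infty\}$. To extract the boundary derivative $k(s,0)$ I would differentiate the explicit representation (\ref{rep2}) of $Q^{(s,x)}\{\t_0=\infty\}$ with respect to $x$. Since $x$ enters only through the upper endpoint $-4^{1/3}x$ of the inner $y$-integral, Leibniz's rule produces a single boundary term proportional to $\ai(iu-4^{1/3}x)/\ai(iu)^{2}$. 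Letting $x\uparrow 0$ collapses one Airy factor, and the change of variable $v=2^{1/3}u$ converts the remaining integral into the claimed Fourier-type formula for $k(s,0)=-\partial_{x}Q^{(s,x)}\{\t_0=\infty\}\big|_{x=0^{-}}$.

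For part (ii), I would invoke space homogeneity of the parabolic drift. Since $-u^{2}$ does not depend on position, shifting space by $-a$ turns $X$ started at $x$ into a process of the same form started at $x-a$, so
\[
Q^{(s,x)}\{M>a\}=Q^{(s,x)}\{\t_a<\infty\}=Q^{(s,x-a)}\{\t_0<\infty\},
\]
and differentiating in $a$ gives $f_M(a)=k(s,x-a)$.

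For part (iii), I would apply the strong Markov property at the first-passage time $\t_a$ to factor the joint density at $(t,a)$ as (density of $\t_a$ at $t$ under $Q^{(s,x)}$) times (density at the starting level of the supremum of the post-$\t_a$ process, which begins at $a$ at time $t$). By Theorem~\ref{th:stopping_time}(ii) combined with the space homogeneity used in part (ii), the first factor equals $e^{-\frac{2}{3}(t^{3}-s^{3})+2s(x-a)}h_{x-a}(t-s)$; by part (ii) applied to the post-$\t_a$ process, the second factor equals $k(t,a-a)=k(t,0)$. Substituting the formula for $k(t,0)$ from part (i) then yields both lines of (\ref{joint1}).

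The main obstacle is justifying this strong Markov decomposition rigorously, since the post-$\t_a$ process starts exactly on the boundary $a$ and exceeds $a$ immediately almost surely, so the naive product $Q^{(s,x)}\{\t_a\in dt\}\cdot Q^{(t,a)}\{M'\le a\}$ vanishes and conveys no information. The remedy I would adopt is an $\epsilon$-thickening: for small $\epsilon>0$, use strong Markov at $\t_a$ together with space homogeneity to write
\[
Q^{(s,x)}\bigl\{\t_a\in dt,\,\sup_{u>\t_a}X_u\le a+\epsilon\bigr\}=Q^{(s,x)}\{\t_a\in dt\}\cdot Q^{(t,-\epsilon)}\{\t_0=\infty\},
\]
and then apply the linearization $Q^{(t,-\epsilon)}\{\t_0=\infty\}=\epsilon\,k(t,0)+o(\epsilon)$, which is exactly the boundary-derivative statement of part (i), before dividing by $\epsilon$ and passing to $\epsilon\downarrow 0$. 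The resulting density in $(t,a)$ recovers (\ref{joint1}).
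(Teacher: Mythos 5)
Your proposal is correct and follows essentially the same route as the paper: part (i) from Theorem \ref{th:solution_relation}, part (ii) by the identical space-homogeneity computation, and part (iii) by the same first-passage decomposition $f_{(\t_M,M)}(t,a)\,dt=Q^{(s,x-a)}\{\t_0\in dt\}\,k(t,0)$. The only minor differences are that the paper obtains (\ref{k(s,0)}) by differentiating the $\ai\,\bi-\bi\,\ai$ form on the right of (\ref{analytic_relation}) and invoking the Wronskian $\ai\bi'-\ai'\bi=1/\pi$, whereas you apply Leibniz's rule to the representation (\ref{rep2}) (both yield the same integral), and that your $\epsilon$-thickening in (iii) spells out a boundary-derivative step that the paper leaves implicit in its double-integral identity for $Q^{(s,x)}\{\t_M<t,\,M<a\}$.
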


\begin{proof}
(i) (\ref{stopping_time_dist}) is relation (\ref{analytic_relation}), which follows from Theorem \ref{th:solution_relation} in Section \ref{section:analytic_relation}, and (\ref{k(s,0)}) follows from the representation in the right-hand side of (\ref{analytic_relation}) by taking the derivative w.r.t.\ $x$, letting $x\downarrow0$ and using that the Wronskian of the two solutions $\ai$ and $\bi$ of the Airy differential equation equals $1/\pi$.\\
(ii) By a space homogeneity argument, the density of the maximum $M$ under $Q^{(s,x)}$ is given by
\begin{align*}
&-\frac{\partial}{\partial a}Q^{(s,x)}\{M>a\}=-\frac{\partial}{\partial a}Q^{(s,x)}\{\t_a<\infty\}
=-\frac{\partial}{\partial a}Q^{(s,x-a)}\{\t_0<\infty\}\\
&=\frac{\partial}{\partial x}Q^{(s,x-a)}\{\t_0<\infty\}=k(s,x-a).
\end{align*}
(iii)
Since, if the process starts at $(s,x)$, with $s<t$ and $x<a$, we only can have $\t_M<t$ and $M<a$ if $M=b\in(x,a)$ and $\t_M=u\in(s,t)$, we have:
\begin{align*}
Q^{(s,x)}\left\{\t_M<t,\,M<a\right\}=\int_{b=x}^a\int_{u=s}^t Q^{(s,x)}\left\{\t_b\in du\right\}k(u,0)\,db,
\end{align*}
where $k(u,0)$ corresponds to the event that the maximum of the path, started at $b$ at time $u$, stays below $b$. Hence differentiation gives:
\begin{align*}
f^{(s,x)}_{\t_M,M}(t,a)\,dt&= Q^{(s,x)}\left\{\t_a\in dt\right\}k(t,0)
=Q^{(s,x-a)}\left\{\t_0\in dt\right\}k(t,0)\\
&=e^{-\tfrac23\bigl(t^3-s^3\bigr)+2s(x-a)}h_{x-a}(t-s)k(t,0)\,dt,
\end{align*}
where we use part (ii) of Theorem \ref{th:stopping_time} in the last equality.
\end{proof}

As a corollary we get the corresponding result for two-sided Brownian motion.

\begin{corollary}
\label{cor:loc+locmax_BM}
Let $X_t=W(t)-t^2$, where $\{W(t):t\in\R\}$ is two-sided Brownian motion, originating from zero. Furthermore, let $M$ and $\t_M$ be the maximum and the location of the maximum of the process $\{X_t:t\in\R\}$, respectively. Then the joint density of $(\t_M,M)$ is given by
\begin{equation}
\label{joint-2sided}
f_{(\t_M,M)}(t,a)=h_{-a}(|t|)g(0,-a)\f(|t|),
\end{equation}
where $h_{-a}$ is defined as in part (ii) of Theorem \ref{th:stopping_time}, $g(t,-a)$ by (\ref{an_representation2})
of Lemma \ref{lemma:function2}, and $\phi$ by:
\begin{equation}
\label{phi}
\f(t)=\frac{1}{4^{1/3}\pi}\int_{v=-\infty}^{\infty}\frac{e^{-itv}}{\ai(i2^{-1/3}v)}\,dv,\,t\in\R.
\end{equation}
\end{corollary}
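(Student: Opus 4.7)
The plan is to reduce the two-sided statement to the one-sided result (iii) of Theorem \ref{th:max+locmax} by splitting the two-sided Brownian motion at time zero and exploiting independence together with the evenness of the parabolic drift $t\mapsto t^2$. Set $X^{\pm}_u=W(\pm u)-u^2$ for $u\ge 0$; by the symmetry of the parabola and the independence of $\{W(t):t\ge 0\}$ and $\{W(-t):t\ge 0\}$, the processes $X^+$ and $X^-$ are independent copies of Brownian motion with parabolic drift starting at $(0,0)$. The argmax of $\{W(t)-t^2:t\in\R\}$ is almost surely unique, so for $t>0$ and $a>0$ the event $\{\t_M\in dt,\,M\in da\}$ decomposes into three independent ingredients: (a) $X^+$ first hits level $a$ at time $t$; (b) the post-$t$ continuation of $X^+$ has maximum exactly $a$; (c) the path $X^-$ stays strictly below $a$ at all times $u>0$.

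Next, each factor is identified using results already proved in this paper. Spatial homogeneity and Theorem \ref{th:stopping_time}(ii) give for (a) the first-passage density $Q^{(0,0)}\{\t_a\in dt\}/dt=Q^{(0,-a)}\{\t_0\in dt\}/dt=e^{-\tfrac23 t^3}h_{-a}(t)$. The strong Markov property of $X^+$ at its hitting time of $a$ places the continuation in the framework of Theorem \ref{th:max+locmax}(ii) with starting state $(t,a)$, and the density of its running maximum evaluated at level $a$ produces the boundary factor $k(t,0)=e^{\tfrac23 t^3}\phi(t)$ from (\ref{k(s,0)}). For (c), spatial homogeneity and the representation (\ref{rep2}) at $(s,x)=(0,-a)$ give $Q^{(0,0)}\{\sup_{u>0}X^-_u<a\}=Q^{(0,-a)}\{\t_0=\infty\}=g(0,-a)$. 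Multiplying the three factors causes the two exponentials $e^{\pm\tfrac23 t^3}$ to cancel, yielding $f_{(\t_M,M)}(t,a)=h_{-a}(t)g(0,-a)\phi(t)$, which is (\ref{joint-2sided}) for $t>0$. The case $t<0$ follows from the invariance of $\{W(t)-t^2:t\in\R\}$ under $t\mapsto -t$, which swaps the roles of $X^+$ and $X^-$ and forces the $|t|$'s appearing in $h_{-a}$ and $\phi$.

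The main obstacle is the clean factorisation into the three independent events (a)--(c); the rest of the argument is a direct assembly of already-proved identities and the algebraic cancellation of exponentials. Its rigorous justification rests on two standard facts: the almost-sure uniqueness of the argmax of $W(t)-t^2$, and the strong Markov property of $X^+$ at the hitting time of $a$, ensuring independence of the pre-$t$ and post-$t$ behaviour conditional on the hitting event. The only potentially delicate analytic point---giving meaning to the boundary value $k(t,0)$ as $x\uparrow 0$---has already been handled inside Theorem \ref{th:max+locmax}.
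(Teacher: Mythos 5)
Your proposal is correct and follows essentially the same route as the paper: split the two-sided process at zero into independent halves, use Theorem \ref{th:max+locmax} for the one-sided joint density of $(\t_M,M)$ (which you reassemble from the first-passage density $e^{-\tfrac23t^3}h_{-a}(t)$ and the boundary factor $k(t,0)=e^{\tfrac23t^3}\f(t)$, exactly the decomposition underlying part (iii)), multiply by $Q^{(0,-a)}\{\t_0=\infty\}=g(0,-a)$ for the other half, and invoke symmetry for $t<0$. The exponential cancellation and the identification of each factor match the paper's argument.
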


\begin{proof}
Let $t>0$ and let  $\M_+$ and $\t_{M+}$ be the maximum and the location of the maximum for the one-side process to the right of zero. By part (iii) of Theorem \ref{th:max+locmax}, the density of $(\t_{M+},M_+)$ is given by (\ref{joint1}), which, since $s=x=0$, boils down to
$$
\frac{h_{-a}(t)}{4^{1/3}\pi}\int_{v=-\infty}^{\infty}\frac{e^{-itv}}{\ai(i\xi)}\,dv.
$$
If we want to turn this into the density of the global maximum and location of maximum on $\R$, we have to multiply this density with the probability that the maximum left of zero is less than $M+$, which means, using a symmetry argument, that the density becomes:
$$
f_{(\t_M,M)}(t,a)=\frac{h_{-a}(t)Q^{(0,0)}\{\t_a=\infty\}}{4^{1/3}\pi}\int_{v=-\infty}^{\infty}\frac{e^{-itv}}{\ai(i\xi)}\,dv
=\frac{h_{-a}(t)Q^{(0,-a)}\{\t_0=\infty\}}{4^{1/3}\pi}\int_{v=-\infty}^{\infty}\frac{e^{-itv}}{\ai(i\xi)}\,dv,
$$
where $\xi=2^{-1/3}v$. By (\ref{stopping_time_dist}) of Theorem \ref{th:max+locmax} we now get:
$$
f_{(\t_M,M)}(t,a)
=\frac{h_{-a}(t)g(0,-a)}{4^{1/3}\pi}\int_{v=-\infty}^{\infty}\frac{e^{-itv}}{\ai(i\xi)}\,dv
=h_{-a}(t)g(0,-a)\f(t).
$$
The case where the maximum is reached to the left of zero is treated in a similar way.
\end{proof}

\begin{remark}
{\rm
Note that the function $\f$, defined by (\ref{phi}), has the following probabilistic interpretation:
\begin{equation}
\label{phi_interpretation}
\f(t)=-e^{-\tfrac23t^3}\frac{\partial}{\partial x}Q^{(t,x)}\{\t_0=\infty\}\biggr|_{x=0}.
\end{equation}
This interpretation can perhaps easiest be seen from the representation (\ref{rep2}) 
in Theorem \ref{th:max+locmax}. The function defines the density of the location of the maximum, as is seen in the following Corollary \ref{cor:Chernoff}.
}
\end{remark}

\begin{corollary}
\label{cor:Chernoff}
Let $X_t=W(t)-t^2$, where $\{W(t):t\in\R\}$ is two-sided Brownian motion, originating from zero. Then the density of the location of the maximum $\t_M$ is given by:
\begin{equation}
f_{\t_M}(t)=\tfrac12\f(t)\f(-t),
\end{equation}
where $\f$ is defined by (\ref{phi}).
\end{corollary}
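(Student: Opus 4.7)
The plan is to reduce Corollary~\ref{cor:Chernoff} to proving the single integral identity
$$
\int_0^\infty h_{-a}(t)\,g(0,-a)\,da=\tfrac12\,\phi(-t)\qquad\text{for }t>0,
$$
and then to prove this identity by Airy/Scorer-function manipulations. The reduction is immediate: by Corollary~\ref{cor:loc+locmax_BM}, integrating the joint density over $a>0$ gives, for $t>0$,
$$
f_{\tau_M}(t)=\phi(t)\int_0^\infty h_{-a}(t)\,g(0,-a)\,da=\phi(t)\,I(t),
$$
and the case $t<0$ then follows from the $u\mapsto -u$ symmetry of two-sided Brownian motion, which makes $\{W(u)-u^2\}\stackrel{d}{=}\{W(-u)-u^2\}$ and hence $f_{\tau_M}$ even.

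To compute $I(t)$ I would use Theorem~\ref{th:solution_relation}(ii) combined with the Laplace inversion of Theorem~\ref{th:stopping_time}(ii) to write $g(0,-a)=1-\int_0^\infty h_{-a}(u)\,e^{-\tfrac23 u^3}\,du$, which decomposes
$$
I(t)=\int_0^\infty h_{-a}(t)\,da-\int_0^\infty e^{-\tfrac23 u^3}\!\left(\int_0^\infty h_{-a}(t)\,h_{-a}(u)\,da\right)du.
$$
The key evaluation is of $\int_0^\infty h_{-a}(t)\,h_{-a}(u)\,da$. Substituting the Fourier representation~(\ref{def_h_x}) and applying the classical Airy identity
$$
\int_0^\infty \ai(\alpha+y)\,\ai(\beta+y)\,dy=\frac{\ai(\alpha)\,\ai'(\beta)-\ai'(\alpha)\,\ai(\beta)}{\alpha-\beta}
$$
(proved by integration by parts from $\ai''(z)=z\,\ai(z)$), partial-fractioning, and carrying out a principal-value Fourier evaluation $\int e^{isv}(v-v')^{-1}dv=i\pi\,e^{isv'}$ for $s>0$, the resulting double integral collapses to a single Fourier integral depending only on $t+u$. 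Integrating against $e^{-\tfrac23 u^3}$ and using $\int_0^\infty e^{iuv-\tfrac23 u^3}du=(\pi/2^{1/3})\,\hi(i\,2^{-1/3}v)$ brings in the Scorer function. A parallel direct computation expresses $\int_0^\infty h_{-a}(t)\,da$ in terms of $\Phi(z):=\int_z^\infty\ai(w)\,dw$. Assembling everything yields
$$
I(t)=\frac{1}{2^{5/3}\pi}\int_{-\infty}^\infty e^{itv}\,\frac{\Phi(i\xi)-\pi\,\ai'(i\xi)\,\hi(i\xi)}{\ai(i\xi)}\,dv,\qquad\xi=2^{-1/3}v.
$$

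The concluding step is the Wronskian-type Airy--Scorer identity
$$
\Phi(z)-\pi\,\ai'(z)\,\hi(z)=1-\pi\,\ai(z)\,\hi'(z),
$$
which I would prove by verifying $(\ai\,\hi'-\ai'\,\hi)'=\ai/\pi$ from $\ai''=z\ai$ and the Scorer equation $\hi''-z\hi=1/\pi$, and then fixing the constant of integration at $z=0$ from the standard values of $\ai,\ai',\hi,\hi'$ there. Substituting splits $I(t)$ into a principal term $(2^{5/3}\pi)^{-1}\int e^{itv}/\ai(i\xi)\,dv$, which is exactly $\tfrac12\,\phi(-t)$ by~(\ref{phi}), and a correction $-2^{-5/3}\int e^{itv}\,\hi'(i\xi)\,dv$; the latter vanishes for $t>0$ by a causality argument (substitute $\hi'(z)=\pi^{-1}\int_0^\infty u\,e^{zu-u^3/3}du$ and swap order of integration, so that the inner $v$-integral is $2\pi\,\delta(t+2^{-1/3}u)$, and this delta has no support on $u>0$ when $t>0$; equivalently, shift the $v$-contour into the lower half-plane). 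The main obstacles will be the rigorous handling of the principal-value and distributional Fourier integrals on the road to the boxed formula for $I(t)$, and the verification of the Airy--Scorer identity linking $\Phi$, $\ai'\hi$ and $\ai\hi'$.
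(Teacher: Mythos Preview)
Your reduction to the identity $I(t)=\int_0^\infty h_{-a}(t)g(0,-a)\,da=\tfrac12\phi(-t)$ for $t>0$ is exactly what the paper does (it calls this integral $\psi(t)$ and records the identity as Lemma~\ref{lemma:psi_phi}). From that point on, however, your argument is genuinely different from the paper's.

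The paper proves $\psi(t)=\tfrac12\phi(-t)$ in Appendix~E by inserting the representation~(\ref{function2_rep}) for $g(0,-x)$ and the Fourier representation~(\ref{def_h_x}) for $h_{-x}$, and then attacking the resulting triple integral directly: an $\varepsilon$-regularisation is introduced, the inner contour is shifted to an approximate saddle point, the leading Airy asymptotics are substituted, and the limit $\varepsilon\downarrow 0$ is taken. No Scorer functions appear at this stage, and the classical product identity $\int_0^\infty\ai(\alpha+y)\ai(\beta+y)\,dy=(\ai(\alpha)\ai'(\beta)-\ai'(\alpha)\ai(\beta))/(\alpha-\beta)$ is not used.

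Your route is more ``special-function algebraic'': you decompose $g(0,-a)=1-\int_0^\infty e^{-\tfrac23 u^3}h_{-a}(u)\,du$ from Theorem~\ref{th:solution_relation}(ii), reduce the cross term via the Airy product identity, and close the computation with the Wronskian-type relation $\Phi(z)-\pi\,\ai'(z)\hi(z)=1-\pi\,\ai(z)\hi'(z)$, whose proof you sketch correctly (differentiate using $\ai''=z\ai$ and $\hi''-z\hi=1/\pi$, then fix the constant). The final causality argument that $\int e^{itv}\hi'(i\xi)\,dv=0$ for $t>0$ is standard. What your approach buys is transparency: each step is an explicit identity rather than an asymptotic limit, and the Airy--Scorer relation you isolate is of independent interest. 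What the paper's approach buys is that it sidesteps the principal-value and distributional issues you flag---after partial-fractioning, the individual $v$- and $v'$-integrands carry factors like $\ai'(i\xi)/\ai(i\xi)$ that grow like $|v|^{1/2}$, so the collapse to a single Fourier integral really does need the PV interpretation and some care with the order of integration. That is the one place where your sketch would need genuine work to be made rigorous, but the strategy is sound.
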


\begin{proof}
We have by Corollary \ref{cor:loc+locmax_BM} and Theorem \ref{th:max+locmax} for $t\in\R$:
\begin{align*}
f_{\t_M}(t)&=\f(t)\int_{x=0}^{\infty}h_{-x}(t)g(0,-x)\,dx.
\end{align*}
Now let $\psi$ be defined by
\begin{equation}
\label{def_psi}
\psi(t)=\int_{x=0}^{\infty}h_{-x}(t)g(0,-x)\,dx.
\end{equation}
Then we have to show:
\begin{equation}
\label{key_Chernoff}
\psi(t)=\tfrac12\f(-t).
\end{equation}
Since, using a time reversal argument, the density obviously has to be symmetric, we only have to prove (\ref{key_Chernoff}) for all $t\ge0$. 
The equality is derived in the proof of Lemma \ref{lemma:psi_phi} in Section
\ref{section:appendixD} by an (asymptotic) analytic argument.
\end{proof}

\begin{remark}
{\rm
Note that the integrand on the right-hand side of (\ref{def_psi}) is the product of the density of the first hitting time $\t_{x}$ under $Q^{(0,0)}$ and the probability that the drifting process stays below $x$ under $Q^{(0,0)}$. The latter probability can also be interpreted as the probability that the process $\{X_t=W(t)-t^2,\,t\le0\}$, starting at zero and running to the left, stays below $x$. So, intuitively, the product $h_{-x}(t)g(0,-x)\f(t)$ corresponds to paths of two-sided Brownian motion minus a parabola, having their first hitting time of $x$ at time $t>0$ (the factor $h_{-x}(t)g(0,-x)$), and staying below $x$ on the interval $[t,\infty)$ (the factor $\f(t)$). The factor $\exp\{-\tfrac23t^3\}$ in (\ref{phi_interpretation}) disappears, since by part (ii) of Theorem \ref{th:stopping_time},
$$
h_{-x}(t)\,dt=e^{\tfrac23t^3}Q^{(0,x)}\left\{\t_0\in dt\right\}.
$$
The factor $\tfrac12$ in front of the product is introduced by going from a derivative in the space variable $x$ in (\ref{phi_interpretation}) to a derivative in the time variable $t$. This seems a bit different from the way the factor $\tfrac12$ entered in Chernoff's argument as the expectation of the squared maximum of the Brownian bridge.}
\end{remark}

\section{Concluding remarks}
\label{section:conclusion}
\setcounter{equation}{0}

We gave a direct approach to Chernoff's theorem and other results of this type, using the Feynman-Kac formula with a stopping time and the analytic relation (\ref{analytic_relation}). Relation (\ref{analytic_relation}) is proved by showing that the integrals in this relation satisfy the parabolic partial differential equation (\ref{pde1}) as a function of the  parameters $s$ (time) and $x$ (space) and by an application of the maximum principle. As shown in \cite{piet:10b} and  \cite{svante:10}, these results also give the distribution of the maximum of Brownian motion minus a parabola itself, both for the one-sided and two-sided case. An asymptotic development of the tail of the distribution of this maximum is given in \cite{nico_piet:11}. We hope that the direct approach of the present paper will make these results more accessible. The original proofs in \cite{gro:89} were rather long and technical, and lacked this property.

It is proved in \cite{piet:11d} and again in \cite{svante:13} that the maximum $M$ and the location of the maximum $\t_M$ of two-sided Brownian motion minus the parabola $t^2$ satisfy the relation
$$
E\t_M^2=\tfrac13EM.
$$
This result is generalized in \cite{leandro:12}, where the relation is proved not using Airy functions, and where a completely general result of this type is given for drifting Brownian motion. More results for moments and combinatorics for Airy integrals are given in \cite{svante:13}. The latter paper ends with a series of conjectures and open problems in this area.

For computational purposes, the representation (\ref{function2_rep}) in Lemma \ref{lemma:function2_rep} seems the best choice, since we lose in this way the inconvenient difference of products of the Airy functions $\ai$ and $\bi$ on the right side of (\ref{analytic_relation}), which are not integrable along the imaginary axis by themselves, and we also do not have the trouble near zero that the function on the left of relation (\ref{analytic_relation}), further analyzed in Lemma \ref{lemma:function1}, is exhibiting.

\section{Appendix A}
\label{section:FK}
\setcounter{equation}{0}
In this section we prove Theorem \ref{th:stopping_time}. We start with the Feynman-Kac part.

Let, for $\l>0$, $u_{\l}$ be the unique non-negative solution of the boundary problem
\begin{equation}
\label{Airy-equation}
\tfrac12u''(x)-\bigl(\l-2x\bigr)u(x)=0,\,x<0,\qquad\lim_{x\uparrow0} u(x)=1,\qquad u(x)\le1,\, x\le0.
\end{equation}
The unique solution of (\ref{Airy-equation}) is given by
\begin{equation}
\label{u_lambda}
u_{\l}(x)=\frac{\ai\bigl(2^{-1/3}\l-4^{1/3}x\bigr)}{\ai\bigl(2^{-1/3}\l\bigr)},\,x\le0.
\end{equation}
We now consider the process
$$
Y_t=e^{-\int_{v=s}^t \bigl(\l-2X_v\bigr)\,dv}u_{\l}(X_t),
$$
where $X_t$ is standard Brownian motion, starting at $x<0$ at time $0$. By It\^o's formula and (\ref{Airy-equation}) we have:
$$
du(X_t)=u_{\l}'(X_t)dX_t+\tfrac12u_{\l}''(X_t)\,dt=u_{\l}'(X_t)dX_t+\bigl(\l-2X_t\bigr)u_{\l}(X_t)\,dt.
$$
So we get:
\begin{align*}
dY_t&=-u_{\l}(X_t)e^{-\int_{v=0}^t \bigl(\l-2X_v\bigr)\,dv}\bigl(\l-2X_t\bigr)\,dt\\
&\qquad\qquad+e^{-\int_{v=0}^t \bigl(\l-2X_v\bigr)\,dv}\left\{u_{\l}'(X_t)dX_t+\bigl(\l-2X_t\bigr)u_{\l}(X_t)\,dt\right\}\\
&=e^{-\int_{v=0}^t \bigl(\l-2X_v\bigr)\,dv}u_{\l}'(X_t)\,dX_t,
\end{align*}
implying that $Y_t$ is a local martingale and that
$$
Y_{\t_0}-x=\int_{t=0}^{\t_0}e^{-\int_{v=0}^t \bigl(\l-2X_v\bigr)\,dv}u_{\l}'(X_t)\,dX_t.
$$
Moreover, $t\mapsto Y_{t\wedge \tau_0}$ is a bounded martingale, and hence:
$$
E^{x}e^{-\int_{v=0}^{\t_0}\bigl(\l-2X_v\bigr)\,dv}
=E^{x}e^{-\int_{v=0}^{\t_0}\bigl(\l-2X_v\bigr)\,dv}u_{\l}(X_{\t_0})=E^{x}Y_{\t_0}=Y_s=u_{\l}(x),
$$
with the conclusion that
$$
\int_{t=0}^{\infty}e^{-\l t}E^x\left\{e^{\int_{v=0}^{t} 2X_v\,dv}\Bigm|\t_0=t\right\}P^{x}\left\{\t_0\in dt\right\}=u_{\l}(x)=\frac{\ai\bigl(2^{-1/3}\l-4^{1/3}x\bigr)}{\ai\bigl(2^{-1/3}\l\bigr)},\quad x<0,
$$
where $P^x$ denotes the probability measure of Brownian motion in standard scale, starting at $x$ at time $0$.

We now turn to the Cameron-Martin-Girsanov part of the proof. Suppose that $P^{(s,x)}$ and $Q^{(s,x)}$ are the probability measures of continuous paths from $(s,x)$ such that:
\begin{enumerate}
\item[(1)] under $P^{(s,x)}$, $\{X_t:t\ge s\}$ is standard Brownian motion with $X_s=x$,
\item[(2)] under $Q^{(s,x)}$, $\{X_t:t\ge s\}$ is standard Brownian motion $-t^2$, with $X_s=x$.
\end{enumerate}
Then, by the Cameron-Martin-Girsanov formula, $Q^{(s,x)}<<P^{(s,x)}$ on $\{{\cal F}_t:t\ge s\}$, where ${\cal F}_t=\s\{X_u:u\in[s,t]\}$ and
$$
\frac{dQ^{(s,x)}}{dP^{(s,x)}}\Bigr|_{{\cal F}_t}=Z_t,
$$
where
$$
Z_t=\exp\left\{-2\int_s^t u\,dX_u-\tfrac23\bigl(t^3-s^3\bigr)\right\}=
\exp\left\{2\int_s^t X_u\,du-2\bigl(tX_t-sX_s\bigr)-\tfrac23\bigl(t^3-s^3\bigr)\right\},\quad t\ge s.
$$
This implies that we have:
\begin{align*}
&Q^{(s,x)}\left\{\t_0\in dt\right\}\\
&=\exp\left\{2sx-\tfrac23\bigl(t^3-s^3\bigr)\right\}E^{P^{(s,x)}}\left\{\exp\left\{2\int_s^t X_u\,du\right\}\bigm|\t_0=t\right\}
P^{(s,x)}\left\{\t_0\in dt\right\}.
\end{align*}
This is part (i) of Theorem \ref{th:stopping_time}. The interpretation in terms of a Bessel process, given in (i) of Theorem \ref{th:stopping_time} is standard (and further detailed in \cite{gro:89}).

Moreover, by time homogeneity and the Feynman-Kac argument, given above, we have
\begin{align*}
&E^{P^{(s,x)}}\left\{\exp\left\{2\int_s^t X_u\,du\right\}\bigm|\t_0=t\right\}
P^{(s,x)}\left\{\t_0\in dt\right\}\\
&=E^{P^{(0,x)}}\left\{\exp\left\{2\int_0^{t-s} X_u\,du\right\}\bigm|\t_0=t-s\right\}
P^{(0,x))}\left\{s+\t_0\in dt\right\}=h_x(t-s)\,dt
\end{align*}
where the function $h_{x}:\R_+\to\R_+$ has Laplace transform
\begin{align*}
\hat h_{x}(\l)=\int_0^{\infty}e^{-\l u}h_{x}(u)\,du
=\ai\bigl(\xi-4^{1/3}x\bigr)/\ai(\xi),\,\quad \xi=2^{-1/3}\l>0.
\end{align*}
This gives part (ii) of Theorem \ref{th:stopping_time}.

\section{Appendix B}
\label{section:appendixB}
\setcounter{equation}{0}

\begin{lemma}
\label{lemma:DE_g}
Let, for $s\in\R$ and $x\ge0$, the function $\tilde g$ be defined by
\begin{equation}
\label{def_tilde_g}
\tilde g(s,x)=\tfrac12\int_{-\infty}^{\infty}e^{-isu}\frac{\ai(iu)\bi(iu+x)-\bi(iu)\ai(iu+x)}{\ai(iu)}\,du.
\end{equation}
Then, for each $s\in\R$, the function $x\mapsto\tilde g(s,x)$ satisfies the differential equation
\begin{equation}
\label{DE_g2}
\frac{\partial}{\partial x}\tilde g(s,x)=s\tilde g(s,x)+\frac1{2\pi}\int_{u=-\infty}^{\infty}e^{-isu}\frac{\ai(iu+x)}{\ai(iu)^2}\,du.
\end{equation}
\end{lemma}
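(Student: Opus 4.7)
The plan is to convert the factor $s$ on the right-hand side of (\ref{DE_g2}) into a $u$-derivative via the identity $s\,e^{-isu}=i\,\partial_u e^{-isu}$, integrate by parts in $u$, and then exploit the Wronskian identity $\ai(z)\bi'(z)-\bi(z)\ai'(z)=1/\pi$ to match the resulting integrand to the two pieces of (\ref{DE_g2}).

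Concretely, I would first differentiate (\ref{def_tilde_g}) in $x$ under the integral sign to obtain
$$
\partial_x\tilde g(s,x)=\tfrac12\int_{-\infty}^{\infty}e^{-isu}\,\frac{\ai(iu)\bi'(iu+x)-\bi(iu)\ai'(iu+x)}{\ai(iu)}\,du.
$$
Next, starting from $s\tilde g(s,x)$, I would substitute $s\,e^{-isu}=i\,\partial_u e^{-isu}$ and integrate by parts in $u$, arriving at
$$
s\tilde g(s,x)=-\tfrac{i}{2}\int_{-\infty}^{\infty}e^{-isu}\,\partial_u\!\left[\frac{\ai(iu)\bi(iu+x)-\bi(iu)\ai(iu+x)}{\ai(iu)}\right]du.
$$
Writing the bracketed quotient as $\bi(iu+x)-[\bi(iu)/\ai(iu)]\,\ai(iu+x)$, the only delicate $u$-derivative is $\partial_u[\bi(iu)/\ai(iu)]$, which the Wronskian reduces to $(i/\pi)/\ai(iu)^2$. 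A short calculation then gives
$$
\partial_u\frac{\ai(iu)\bi(iu+x)-\bi(iu)\ai(iu+x)}{\ai(iu)}=\frac{i\bigl[\ai(iu)\bi'(iu+x)-\bi(iu)\ai'(iu+x)\bigr]}{\ai(iu)}-\frac{i\,\ai(iu+x)}{\pi\,\ai(iu)^2}.
$$
Substituting back, the first piece recombines into $\partial_x\tilde g(s,x)$ computed in the opening step, while the second leaves $-\frac{1}{2\pi}\int e^{-isu}\ai(iu+x)/\ai(iu)^2\,du$; moving the latter to the other side yields (\ref{DE_g2}).

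The main obstacle is not the algebra but its justification. I need to verify that $\partial_x$ commutes with the integral defining $\tilde g$ and, more importantly, that the boundary contributions at $u=\pm\infty$ in the integration by parts vanish. Both issues are controlled by the oscillatory asymptotics of $\ai$ and $\bi$ along the imaginary axis, together with the absence of real zeros of $\ai(iu)$. Because $\ai(iu)$ and $\bi(iu)$ are of comparable magnitude as $|u|\to\infty$, there is cancellation inside $\ai(iu)\bi(iu+x)-\bi(iu)\ai(iu+x)$ that produces extra algebraic decay in $u$; likewise, for $x\ge 0$ the ratio $\ai(iu+x)/\ai(iu)^2$ is integrable in $u$. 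These asymptotic bounds, though routine, are the only non-trivial ingredient beyond the Wronskian identity.
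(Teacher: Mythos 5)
Your argument is correct, and it reaches \eqref{DE_g2} by a genuinely leaner route than the paper's. The paper first decomposes $\bi$ via the connection formula into the rotated Airy functions $\ai(ze^{\pm 2i\pi/3})$, splits the $u$-integral at $u=0$, integrates by parts in each of the four resulting pieces, and then needs two auxiliary identities to reassemble everything: the relation $\ai(x)=-e^{-2i\pi/3}\ai\bigl(e^{-2i\pi/3}x\bigr)-e^{2i\pi/3}\ai\bigl(e^{2i\pi/3}x\bigr)$ to cancel the boundary terms produced at $u=0$, and the derivative identity \eqref{h(u)} (imported from \cite{nico_piet:11}) to generate the $1/\ai(iu)^2$ term. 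You instead keep the combination $\ai(iu)\bi(iu+x)-\bi(iu)\ai(iu+x)$ intact, trade the factor $s$ for a single integration by parts in $u$ over the whole line (so no boundary terms at finite points arise), and obtain the $1/\ai(iu)^2$ term directly from the standard Wronskian $\ai\bi'-\ai'\bi=1/\pi$; your computation of $\partial_u\bigl[\bi(iu)/\ai(iu)\bigr]=i/\bigl(\pi\ai(iu)^2\bigr)$ and the resulting bookkeeping of factors of $i$ check out exactly. The underlying mechanism (integration by parts in $u$ plus a Wronskian identity) is the same, but your version avoids the connection-formula case analysis; the paper's version, in exchange, produces the rotated-Airy representation of $\tilde g$ that is reused elsewhere for asymptotics. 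One small imprecision: the integrand of $\tilde g$ does not merely gain ``extra algebraic decay'' from cancellation in the numerator --- the numerator is at most sub-exponential in $u$ while $\ai(iu)$ in the denominator grows like $\exp\bigl(\tfrac{\sqrt2}{3}|u|^{3/2}\bigr)$, so the integrand in fact decays exponentially, which makes the vanishing of the boundary terms and the differentiation under the integral sign even easier to justify than you suggest. The paper leaves these same convergence points implicit, so this is not a gap in your argument relative to the paper's own standard of rigor.
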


\begin{proof}
We have:
\begin{align*}
\tilde g(s,x)&=\tfrac12\int_{-\infty}^{\infty}e^{-isu}\frac{\ai(iu)\bi(iu+x)-\bi(iu)\ai(iu+x)}{\ai(iu)}\,du\\
&=e^{-i\pi/6}\int_{0}^{\infty}e^{-isu}\left\{\ai\bigl(e^{-i\pi/6}u+e^{-2i\pi/3}x\bigr)-\frac{\ai\bigl(e^{-i\pi/6}u\bigr)\ai(iu+x)}{\ai(iu)}\right\}\,du\\
&\qquad\qquad+e^{i\pi/6}\int_{0}^{\infty}e^{isu}\left\{\ai\bigl(e^{i\pi/6}u+e^{2i\pi/3}x\bigr)-\frac{\ai\bigl(e^{i\pi/6}u\bigr)\ai(-iu+x)}{\ai(-iu)}\right\}\,du
,\quad s\in\R,\quad x>0.
\end{align*}
We now get:
\begin{align*}
&\frac{\partial}{\partial x}e^{-i\pi/6}\int_{0}^{\infty}e^{-isu}\ai\bigl(e^{-i\pi/6}u+e^{-2i\pi/3}x\bigr)\,du
=e^{-5i\pi/6}\int_{0}^{\infty}e^{-isu}\ai'\bigl(e^{-i\pi/6}u+e^{-2i\pi/3}x\bigr)\,du\\
&=\left[e^{-2i\pi/3}e^{-isu}\ai\bigl(e^{-i\pi/6}u+e^{-2i\pi/3}x\bigr)\right]_{u=0}^{\infty}
+ise^{-2i\pi/3}\int_{0}^{\infty}e^{-isu}\ai\bigl(e^{-i\pi/6}u+e^{-2i\pi/3}x\bigr)\,du\\
&=-e^{-2i\pi/3}\ai\bigl(e^{-2i\pi/3}x\bigr)
+ise^{-2i\pi/3}\int_{0}^{\infty}e^{-isu}\ai\bigl(e^{-i\pi/6}u+e^{-2i\pi/3}x\bigr)\,du.
\end{align*}
Similarly,
\begin{align*}
&\frac{\partial}{\partial x}e^{i\pi/6}\int_{-\infty}^{0}e^{-isu}\ai\bigl(e^{i\pi/6}u+e^{2i\pi/3}x\bigr)\,du\\
&=-e^{2i\pi/3}\ai\bigl(e^{2i\pi/3}x\bigr)
-ise^{2i\pi/3}\int_{0}^{\infty}e^{isu}\ai\bigl(e^{i\pi/6}u+e^{2i\pi/3}x\bigr)\,du.
\end{align*}
This implies:
\begin{align*}
&\frac{\partial}{\partial x}
\left\{e^{-i\pi/6}\int_{0}^{\infty}e^{-isu}\ai\bigl(e^{-i\pi/6}u+e^{-2i\pi/3}x\bigr)\,du
+e^{i\pi/6}\int_{0}^{\infty}e^{isu}\ai\bigl(e^{i\pi/6}u+e^{2i\pi/3}x\bigr)\,du\right\}\\
&=\ai(x)+se^{-i\pi/6}\int_{0}^{\infty}e^{-isu}\ai\bigl(e^{-i\pi/6}u+e^{-2i\pi/3}x\bigr)\,du
+se^{i\pi/6}\int_{0}^{\infty}e^{isu}\ai\bigl(e^{i\pi/6}u+e^{2i\pi/3}x\bigr)\,du,
\end{align*}
using
$$
\ai(x)=-e^{-2i\pi/3}\ai\bigl(e^{-2i\pi/3}x\bigr)-e^{2i\pi/3}\ai\bigl(e^{2i\pi/3}x\bigr).
$$
Furthermore,
\begin{align*}
&e^{-i\pi/6}\int_{0}^{\infty}e^{-isu}\frac{\ai\bigl(e^{-i\pi/6}u\bigr)\ai'(iu+x)}{\ai(iu)}\,du\\
&=\left[e^{-i\pi/6}e^{-isu}\frac{\ai\bigl(e^{-i\pi/6}u\bigr)\ai(iu+x)}{i\ai(iu)}\right]_{u=0}^{\infty}
-e^{-i\pi/6}\int_{0}^{\infty}\ai(iu+x)\frac{d}{du}\left(e^{-isu}\frac{\ai\bigl(e^{-i\pi/6}u\bigr)}{i\ai(iu)}\right)\,du\\
&=e^{i\pi/3}\ai(x)-\int_{0}^{\infty}\ai(iu+x)\frac{d}{du}\left(\frac{e^{-i\pi/6}\ai\bigl(e^{-i\pi/6}u\bigr)}{i\ai(iu)}\right)\,du\\
&\qquad\qquad\qquad\qquad\qquad\qquad+se^{-i\pi/6}\int_{0}^{\infty}e^{-isu}\frac{\ai\bigl(e^{-i\pi/6}u\bigr)\ai(iu+x)}{i\ai(iu)}\,du.
\end{align*}
Since
\begin{align}
\label{h(u)}
h(u)\stackrel{\text{\small def}}=\frac{d}{du}\left(\frac{e^{-i\pi/6}\ai\bigl(e^{-i\pi/6}u\bigr)}{i\ai(iu)}\right)=\frac1{2\pi\ai(iu)^2}\,,
\end{align}
(see the proof of Lemma 2.2 in \cite{nico_piet:11}), the conclusion now follows.
\end{proof}

\section{Appendix C}
\label{section:appendixC}
\setcounter{equation}{0}

We first note that, for $s=0$, the limit of $g(s,x)$, as $x\to-\infty$, has been studied in \cite{nico_piet:11}, since it describes the behavior of the maximum of one-sided Brownian motion minus a parabola, starting at time $s=0$. However, this analysis started from the probabilistic interpretation of the right-hand side of relation (\ref{analytic_relation}), which is something we cannot do at this point, since we want to derive this interpretation without assuming (\ref{analytic_relation}). But we can use similar techniques for the asymptotic analysis.

Using the representation (\ref{function2_rep}) in Lemma \ref{lemma:function2_rep}, we can write:
$$
e^{2sx}g(s,x)=\frac{1}{2\pi}\int_{u=-\infty}^{\infty}\frac{\int_{y=0}^{-4^{1/3}x}e^{-2^{1/3}s(iu+y)}\ai(iu+y)\,dy}{\ai(iu)^2}\,du\,.
$$
So we get:
\begin{equation}
\label{Airy_limit}
\lim_{x\to-\infty}e^{2sx}g(s,x)
=\frac{1}{2\pi}\int_{u=-\infty}^{\infty}\frac{\int_{y=0}^{\infty}e^{-2^{1/3}s(iu+y)}\ai(iu+y)\,dy}{\ai(iu)^2}\,du.
\end{equation}

We now have the following lemma.

\begin{lemma}
\label{lemma:Laplace_transform}
\begin{equation}
\label{Laplace_transforms}
\frac{1}{2\pi}\int_{u=-\infty}^{\infty}\frac{\int_{y=0}^{\infty}e^{-2^{1/3}s(iu+y)}\ai(iu+y)\,dy}{\ai(iu)^2}\,du
=e^{-\tfrac23s^3},\,\forall s\in\R.
\end{equation}
\end{lemma}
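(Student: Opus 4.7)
Write $H(s)$ for the left-hand side of (\ref{Laplace_transforms}). The plan is to establish the first-order ODE $H'(s)=-2s^{2}H(s)$, which forces $H(s)=H(0)\exp(-\tfrac{2}{3}s^{3})$, and then to show separately that $H(0)=1$.

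For the ODE, differentiate $H$ under the integral sign, bringing down a factor $-2^{1/3}(iu+y)$, and invoke the Airy equation $\ai''(w)=w\,\ai(w)$ to replace $(iu+y)\ai(iu+y)$ by $\ai''(iu+y)$. Two integrations by parts in $y$ (whose $y=\infty$ boundary terms vanish thanks to the super-exponential decay of $\ai$ on the positive real axis) yield
\[
\int_{0}^{\infty}\!e^{-\lambda(iu+y)}\ai''(iu+y)\,dy=-e^{-i\lambda u}\ai'(iu)-\lambda e^{-i\lambda u}\ai(iu)+\lambda^{2}\!\int_{0}^{\infty}\!e^{-\lambda(iu+y)}\ai(iu+y)\,dy,
\]
with $\lambda=2^{1/3}s$. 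After division by $\ai(iu)^{2}$ and integration in $u$, the last term reassembles into $\lambda^{2}H(s)$, while the two boundary pieces become $\int_{\R}e^{-i\lambda u}\ai'(iu)/\ai(iu)^{2}\,du$ and $\lambda\int_{\R}e^{-i\lambda u}/\ai(iu)\,du$. Using $\ai'(iu)/\ai(iu)^{2}=i\,(d/du)[1/\ai(iu)]$ and integrating the first one by parts in $u$ (the $u$-boundary contributions vanish since $1/\ai(iu)\to 0$ super-exponentially as $|u|\to\infty$) gives $\int e^{-i\lambda u}\ai'(iu)/\ai(iu)^{2}\,du=-\lambda\!\int e^{-i\lambda u}/\ai(iu)\,du$. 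The two boundary pieces therefore cancel exactly, and $H'(s)=-2^{1/3}\lambda^{2}H(s)=-2s^{2}H(s)$ follows.

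To identify the constant, apply the identity (\ref{h(u)}), namely $1/\ai(iu)^{2}=2\pi\,(d/du)[e^{-i\pi/6}\ai(e^{-i\pi/6}u)/(i\ai(iu))]$, and integrate $H(s)$ by parts in $u$ against the inner $y$-integral $J(u):=\int_{0}^{\infty}e^{-\lambda(iu+y)}\ai(iu+y)\,dy$. A direct computation gives $J'(u)=-ie^{-i\lambda u}\ai(iu)$, so that the surviving integral reduces, modulo boundary terms, to $\int_{\R}e^{-i\pi/6}\ai(e^{-i\pi/6}u)e^{-i\lambda u}\,du$. Analytic continuation of the Fourier identity $\int_{\R}\ai(x)e^{ixt}\,dx=e^{-it^{3}/3}$ to the complex argument $t=-\lambda e^{i\pi/6}$ evaluates this integral as $e^{-\lambda^{3}/3}=e^{-\tfrac{2}{3}s^{3}}$, pinning the constant. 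The main obstacle is the rigorous justification of this last integration by parts: as $u\to-\infty$ the quotient $F(u):=e^{-i\pi/6}\ai(e^{-i\pi/6}u)/(i\ai(iu))$ tends to $-1$ while $|J(u)|$ grows like $|u|^{-1/4}\exp(\tfrac{\sqrt{2}}{3}|u|^{3/2})/\lambda$, so the classical $[FJ]_{-\infty}^{\infty}$ boundary term diverges. I would remedy this by inserting a Gaussian cutoff $e^{-\epsilon u^{2}}$, performing the integration by parts for $\epsilon>0$, and tracking the $\epsilon\downarrow 0$ limit so that the $\epsilon$-correction term exactly matches the naive Fourier calculation; as a fallback, one can confirm $H(0)=1$ by steepest-descent asymptotics of $H(s)$ as $s\to+\infty$, locating the stationary point (at $u=-i\lambda^{2}$ after the rescaling $u=\lambda^{2}v$) that supplies the factor $e^{-\lambda^{3}/3}$ with unit prefactor.
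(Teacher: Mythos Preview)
Your derivation of the ODE $H'(s)=-2s^{2}H(s)$ coincides with the paper's: the authors likewise differentiate under the integral, replace $(iu+y)\ai(iu+y)$ by $\ai''(iu+y)$, integrate by parts twice in $y$, and then use the same $u$-integration by parts (via $\ai'(iu)/\ai(iu)^{2}=i\,(d/du)\bigl[1/\ai(iu)\bigr]$) to show that the two boundary contributions cancel identically.

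The identification of the constant is where the approaches part ways. The paper evaluates $H(0)$ directly: Cauchy's theorem rewrites the inner integral as
\[
\int_{0}^{\infty}\ai(iu+y)\,dy=\int_{0}^{\infty}\ai(y)\,dy-i\int_{0}^{u}\ai(iy)\,dy=\tfrac{1}{3}-i\int_{0}^{u}\ai(iy)\,dy,
\]
so that $H(0)$ splits into two absolutely convergent $u$-integrals; the first is handled by the identity $\tfrac{1}{2\pi}\int_{-\infty}^{\infty}\ai(iu)^{-2}\,du=1$ (proved in Appendix~D from the very antiderivative (\ref{h(u)}) you invoke), and the second evaluates to $\tfrac{2}{3}$, giving $H(0)=1$. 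Your route---integrating $H(s)$ by parts in $u$ against $J(u)$---has a difficulty more serious than the one you flag: not only does the boundary term $F(u)J(u)$ diverge as $u\to-\infty$, but the ``surviving'' integral $\int_{-\infty}^{\infty}e^{-i\pi/6}\ai(e^{-i\pi/6}u)\,e^{-i\lambda u}\,du$ is itself divergent, since $|\ai(e^{-i\pi/6}u)|$ grows like $\exp\bigl(\tfrac{\sqrt{2}}{3}|u|^{3/2}\bigr)$ for $u<0$ while $|e^{-i\lambda u}|=1$. Any Gaussian regularization must therefore reconcile two separately divergent pieces, and the analytic continuation of $\int_{-\infty}^{\infty}\ai(x)e^{ixt}\,dx$ to complex $t$ is not automatic because $\ai$ merely oscillates on the negative real axis. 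The steepest-descent fallback is viable in principle but is considerably more work than the paper's short evaluation at $s=0$.
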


\begin{proof}
Note that
\begin{align*}
&\frac{\partial}{\partial s}\int_{y=0}^{\infty}e^{-2^{1/3}s(iu+y)}\ai(iu+y)\,dy=
-2^{1/3}\int_{y=0}^{\infty}e^{-2^{1/3}s(iu+y)}(iu+y)\ai(iu+y)\,dy\\
&=-2^{1/3}\int_{y=0}^{\infty}e^{-2^{1/3}s(iu+y)}\ai''(iu+y)\,dy\\
&=2^{1/3}\ai'(iu)e^{-2^{1/3}isu}+2^{2/3}s\ai(iu)e^{-2^{1/3}isu}-2s^2\int_{y=0}^{\infty}e^{-2^{1/3}s(iu+y)}\ai(iu+y)\,dy.
\end{align*}
Hence, defining the functions $p$ and $q$ by
$$
p(s)=\frac{1}{2\pi}\int_{u=-\infty}^{\infty}\frac{\int_{y=0}^{\infty}e^{-2^{1/3}s(iu+y)}\ai(iu+y)\,dy}{\ai(iu)^2}\,du,\,s\in\R,
$$
and
$$
q(s)=\frac{1}{2\pi}\int_{u=-\infty}^{\infty}\frac{2^{1/3}\ai'(iu)e^{-2^{1/3}isu}+2^{2/3}s\ai(iu)e^{-2^{1/3}isu}}{\ai(iu)^2}\,du,\,s\in\R,
$$
we get:
\begin{align*}
p'(s)=-2s^2p(s)+q(s).
\end{align*}
But we have, by integration by parts,
\begin{align*}
&\int_{u=-\infty}^{\infty}\frac{2^{2/3}s\ai(iu)e^{-2^{1/3}isu}}{\ai(iu)^2}\,du
=\int_{u=-\infty}^{\infty}\frac{2^{2/3}se^{-2^{1/3}isu}}{\ai(iu)}\,du
=-2^{1/3}\int_{u=-\infty}^{\infty}\frac{e^{-2^{1/3}isu}\ai'(iu)}{\ai(iu)^2}\,du,
\end{align*}
implying $q(s)\equiv0$, and hence:
$$
p(s)=ce^{-\tfrac23s^3},
$$
for some constant $c$. For $s=0$, we get:
$$
p(0)=\frac{1}{2\pi}\int_{u=-\infty}^{\infty}\frac{\int_{y=0}^{\infty}\ai(iu+y)\,dy}{\ai(iu)^2}\,du.
$$

By Cauchy's theorem, we now have, for $u\in\R$,
\begin{align*}
\int_{y=0}^{\infty}\ai(iu+y)\,dy=\int_{y=0}^{\infty}\ai(y)\,dy-i\int_{y=0}^{u}\ai(iy)\,dy
=\frac13-i\int_{y=0}^{u}\ai(iy)\,dy,
\end{align*}
where the integrals are interpreted as (directed) Riemann integrals if $u<0$.
Hence, since
$$
\frac{1}{2\pi}\int_{u=-\infty}^{\infty}\frac1{\ai(iu)^2}\,du=1,
$$
(see Appendix D), we get:
$$
p(0)=1/3-\frac{1}{2\pi}\int_{u=-\infty}^{\infty}\frac{i\int_{y=0}^{u}\ai(iv)\,dv}{\ai(iu)^2}\,du=\frac13+\frac23=1,
$$
so $c=1$. The conclusion of the lemma now follows.
\end{proof}

\vspace{0.3cm}
Relation (\ref{Airy_limit}) and Lemma \ref{lemma:Laplace_transform} yield the second part of (\ref{boundary_condition2}). We therefore have:
\begin{align*}
e^{2sx}g(s,x)=e^{-\tfrac23s^3}-\frac{1}{2\pi}\int_{u=-\infty}^{\infty}\frac{\int_{y=-4^{1/3}x}^{\infty}e^{-2^{1/3}s(iu+y)}\ai(iu+y)\,dy}{\ai(iu)^2}\,du,
\end{align*}
implying
\begin{align*}
\frac{\partial}{\partial x}\left\{e^{2sx}g(s,x)\right\}=-\frac{1}{2^{1/3}\pi}\int_{u=-\infty}^{\infty}\frac{e^{-2^{1/3}s(iu-4^{1/3}x)}\ai(iu-4^{1/3}x)\,dy}{\ai(iu)^2}\,du=O\left(e^{-\tfrac23|x|^{3/2}}\right).
\end{align*}
This, in turn, implies that also
\begin{align*}
&e^{-2sx-\tfrac23s^3}-g(s,x)=e^{-2sx}\left\{e^{-\tfrac23s^3}-e^{2sx}g(s,x)\right\}\\
&=\frac{e^{-2sx}}{2\pi}\int_{u=-\infty}^{\infty}\frac{\int_{y=-4^{1/3}x}^{\infty}e^{-2^{1/3}s(iu+y)}\ai(iu+y)\,dy}{\ai(iu)^2}\,du
=O\left(e^{-\tfrac23|x|^{3/2}-2sx}\right)=O\left(e^{-\tfrac23|x|^{3/2}(1+o(1))}\right),
\end{align*}
and hence, for all $s\in\R$,
\begin{align*}
g(s,x)=e^{-\tfrac23s^3-2sx}+o(1),\,x\to-\infty.
\end{align*}

The analysis of $f(s,x)$ is easier and shows, by standard methods, that also, for all $s\in\R$,
$$
e^{2sx}f(s,x)=O\left(e^{-\tfrac23|x|^{3/2}(1+o(1))}\right),\,x\to-\infty,
$$
implying
$$
\lim_{x\to-\infty}f(s,x)=0,\,\forall s\in\R.
$$

Finally, again using the representation (\ref{function2_rep}) in Lemma \ref{lemma:function2_rep}, we get, for each $x<0$:
\begin{align*}
g(s,x)=\frac{e^{-2sx}}{2\pi}\int_{y=0}^{-4^{1/3}x}\left\{\int_{u=-\infty}^{\infty}\frac{e^{-2^{1/3}s(iu+y)}\ai(iu+y)\,du}{\ai(iu)^2}\,du\right\}\,dy.
\end{align*}
We write:
$$
\frac1{2\pi}\int_{u=-\infty}^{\infty}\frac{e^{-2^{1/3}s(iu+y)}\ai(iu+y)\,du}{\ai(iu)^2}\,du
=\frac1{2\pi i}\int_{u=-i\infty}^{i\infty}\frac{e^{-2^{1/3}s(u+y)}\ai(u+y)\,du}{\ai(u)^2}\,du.
$$
Shifting the integration path of the latter integral to the right, letting it pass through the approximate saddle point $2^{-2/3}s^2$ on the real axis, we get for fixed $y>0$ by the asymptotic expansion of the Airy function:
\begin{align*}
\frac1{2\pi i}\int_{u=-i\infty}^{i\infty}\frac{e^{-2^{1/3}s(u+y)}\ai(u+y)}{\ai(u)^2}\,du
= O\left(s^{3/4}e^{-\tfrac23s^3}\right),
\end{align*}
yielding
$$
\lim_{s\to\infty}g(s,x)=0,\,\forall x<0.
$$
The relation
$$
\lim_{s\to\infty}f(s,x)=0,\,\forall x<0,
$$
follows in a similar way.

\section{Appendix D}
\label{section:appendixD}
\setcounter{equation}{0}
We have not been able to find a direct reference for the nice relation
\begin{equation}
\label{AiryInt}
\frac{1}{2\pi}\int_{u=-\infty}^{\infty}\frac1{\ai(iu)^2}\,du=1,
\end{equation}
so we provide a proof here. The indefinite integral of $1/\ai(iu)^2$ appears in \cite{prudnikov:90}, p.\ 30, but further work is needed to obtain (\ref{AiryInt}). Let $h$ be given by 
(\ref{h(u)}). We use
\begin{align*}\label{defhu}
h(u)=\frac1{2\pi\ai(iu)^2}=\frac{d g(u)}{du},\quad g(u)=\frac{e^{-i\pi/6}\ai\bigl(e^{-i\pi/6}u\bigr)}{i\ai(iu)}\,.
\end{align*}
For the proof we use integration by parts and we need the asymptotic behavior of $g(u)$ as $u\to\pm\infty$.

We have as $z\to\infty$
\begin{equation}\label{asyai}
\ai(z)\sim \frac{e^{-\zeta}}{2\sqrt{\pi}z^{\frac14}},\quad  |\text{ph}(z)|<\pi, \quad \zeta=\frac23z^{\frac32}.
\end{equation}
This gives, as $u\to+\infty$,
\begin{align*}\label{asyuplus}
g(u)\sim -ie^{-\frac43u^{\frac32}e^{\frac14\pi i}},
\end{align*}
which is exponentially small and does not give a contribution when integrating by parts.

For $u\to-\infty$ we write
\begin{equation}\label{gminu}
g(-u)=\frac{e^{-i\pi/6}\ai\bigl(e^{5i\pi/6}u\bigr)}{i\ai(-iu)}\,,
\end{equation}
and we have, using (\ref{asyai}),
$$
g(-u)\sim -1,\quad u\to+\infty.
$$
It follows that we need more details of the asymptotic behavior of the Airy function by using the complete expansion
$$
\ai(z)\sim \frac{e^{-\zeta}}{2\sqrt{\pi}z^{\frac14}}\sum_{k=0}^\infty (-1)^k\frac{u_k}{\zeta^k},\quad  |\text{ph}(z)| <\pi, 
$$
with $\zeta$ given in (\ref{asyai}), and the coefficients $u_k$ do not depend on $z$. The first few are
$u_0=1$ and $u_1=\frac{5}{72}$.

For $z=e^{5i\pi/6}u$ and for $z=-iu$ we have the same $\zeta=\frac23u^{3/2} e^{5i\pi/4}=\frac23u^{3/2} e^{-3i\pi/4}$, and we see that the asymptotic series of the Airy functions in (\ref{gminu}) are the same as well. We conclude
$$
g(-u)=-1+O\left(u^{-k}\right),\quad \forall k, \quad u\to+\infty.
$$
This proves the relation in (\ref{AiryInt}).

\section{Appendix E}
\label{section:appendixE}
\setcounter{equation}{0}
\begin{lemma}
\label{lemma:psi_phi}
Let the functions $\f$ and $\psi$ be defined by (\ref{phi}) and (\ref{def_psi}), respectively. Then, for all $t\ge0$,
$$
\psi(t)=\tfrac12\f(-t).
$$
\end{lemma}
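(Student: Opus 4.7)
The plan is to establish the identity by computing the Laplace transform in $t$ of both sides and matching them. Applying the one-sided Laplace transform to $\psi(t)=\int_0^\infty h_{-x}(t)g(0,-x)\,dx$ and exchanging integrations (justified by the doubly exponential decay $1-g(0,-x)=O(e^{-\tfrac23|x|^{3/2}})$ established in Appendix C, together with the explicit Laplace transform of $h_{-x}$), part (ii) of Theorem \ref{th:stopping_time} yields
$$\hat\psi(\l)=\int_0^\infty g(0,-x)\,\frac{\ai(\xi+4^{1/3}x)}{\ai(\xi)}\,dx,\qquad \xi=2^{-1/3}\l.$$
A short check via the substitution $v\mapsto -v$ and complex conjugation in (\ref{phi}) shows that $\f$ is real and even, so $\mathcal{L}[\tfrac12\f(-\cdot)](\l)=\tfrac12\hat\f(\l)$. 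Interchanging the Laplace integral with the $v$-integral in (\ref{phi}) gives
$$\tfrac12\hat\f(\l)=\frac{1}{2\cdot 4^{1/3}\pi}\int_{-\infty}^\infty\frac{dv}{(\l+iv)\ai(i2^{-1/3}v)}.$$

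The problem thus reduces to proving a nontrivial Airy integral identity equating the two displayed quantities. To establish it, I would substitute the representation (\ref{function2_rep}) of $g(0,-x)$ into $\hat\psi(\l)$, interchange integrations, and change variables $z=4^{1/3}x$, producing the inner double integral
$$\mathcal{I}(iu,\xi)=\int_0^\infty \ai(\xi+z)\int_0^z \ai(iu+y)\,dy\,dz.$$
The substitution $z=y+w$ turns $\mathcal{I}(iu,\xi)$ into $\int_0^\infty\int_0^\infty \ai(iu+y)\ai((\xi+w)+y)\,dy\,dw$, and the Wronskian-type identity
$$\int_0^\infty \ai(a+y)\ai(b+y)\,dy=\frac{\ai(a)\ai'(b)-\ai'(a)\ai(b)}{a-b}$$
reduces it to a single $w$-integral that splits naturally into one part containing $1/\ai(iu)$ and another containing $\ai'(iu)/\ai(iu)^2$. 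Substituting back into the outer $u$-integral, the contribution from $\ai'(iu)/\ai(iu)^2$ is shown to vanish upon contour shift, while the contribution from $1/\ai(iu)$ matches $\tfrac12\hat\f(\l)$ using the normalization $\tfrac1{2\pi}\int du/\ai(iu)^2=1$ from Appendix D and the Wronskian $\ai\bi'-\ai'\bi=1/\pi$.

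The hard part is the asymptotic justification of the multiple interchanges of integration and of the contour manipulations in the $u$-integral. The key enabling fact is the doubly exponential growth of $\ai(iu)$ along the real axis, which provides enough decay of $1/\ai(iu)$ and $1/\ai(iu)^2$ to allow contours to be shifted through the (removable) singularity at $iu=\xi+w$ in the Wronskian-type identity and to ensure that boundary terms vanish at infinity and at the origin. These estimates follow the template of the asymptotic analysis carried out in Appendix D to prove $\tfrac1{2\pi}\int du/\ai(iu)^2=1$, and in particular rely on the same precise uniform asymptotic expansions of $\ai$ on the imaginary axis.
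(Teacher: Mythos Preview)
Your approach via Laplace transforms is genuinely different from the paper's: the paper carries out a direct computation of $\psi(t)$ by inserting the integral representations for $h_{-x}$ and $g(0,-x)$, introducing a regularizing parameter $\varepsilon>0$, evaluating the inner $v$-integral by a saddle-point argument (the saddle sits near $\tfrac14(y/t)^{2}$), and then letting $\varepsilon\downarrow0$. No Laplace transform appears.

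That said, your route has a real error at the outset. The claim that $\varphi$ is even is false. The substitution $v\mapsto-v$ together with $\mathrm{Ai}(-i\xi)=\overline{\mathrm{Ai}(i\xi)}$ shows only that $\varphi$ is \emph{real}; it gives $\varphi(-t)=\overline{\varphi(t)}=\varphi(t)$ only if one also had $\mathrm{Ai}(i\xi)$ real on the line, which it is not. Indeed, the very reason the Chernoff density is written as $\tfrac12\varphi(t)\varphi(-t)$ rather than $\tfrac12\varphi(t)^{2}$ is that $\varphi$ is not symmetric. Consequently your identification
\[
\mathcal L\bigl[\tfrac12\varphi(-\cdot)\bigr](\lambda)=\tfrac12\hat\varphi(\lambda)
=\frac{1}{2\cdot4^{1/3}\pi}\int_{-\infty}^{\infty}\frac{dv}{(\lambda+iv)\,\mathrm{Ai}(i2^{-1/3}v)}
\]
is wrong: the correct target, obtained by interchanging directly in $\int_0^\infty e^{-\lambda t}\varphi(-t)\,dt$, has $(\lambda-iv)$ in the denominator, not $(\lambda+iv)$. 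This changes the identity you must prove and hence the subsequent matching.

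Even after correcting the target, the later step ``the contribution from $\mathrm{Ai}'(iu)/\mathrm{Ai}(iu)^{2}$ vanishes upon contour shift'' is not justified and does not follow from the template of Appendix~D. After one integration by parts that term becomes
\[
-\int_{-\infty}^{\infty}\frac{du}{(iu-\xi-w)^{2}\,\mathrm{Ai}(iu)},
\]
and a contour shift of the imaginary axis in $z=iu$ to the right necessarily crosses the double pole at $z=\xi+w>0$, producing a nonzero residue $-\mathrm{Ai}'(\xi+w)/\mathrm{Ai}(\xi+w)^{2}$; moreover $1/\mathrm{Ai}(z)$ grows, not decays, as $\mathrm{Re}\,z\to+\infty$, so the shifted contour does not vanish at infinity. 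So this piece does not simply drop out, and the sketch as written does not close.
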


\begin{proof}
By (\ref{def_h_x}) we have for $t\ge0$ and $x>0$:
$$
h_{-x}(t)=\frac1{2\pi}\int_{v=-\infty}^{\infty}e^{itv}\frac{\ai(i2^{-1/3}v+4^{1/3}x)}{\ai(i2^{-1/3}v)}\,dv,
$$
and by Lemma \ref{lemma:function2_rep} we have:
$$
g(0,-x)=\frac1{2\pi}\int_{u=-\infty}^{\infty}\frac{\int_{y=0}^{4^{1/3}x}\ai(iu+y)\,dy}{\ai(iu)^2}\,du.
$$
Hence we get:
\begin{align}
\label{def_psi2}
\psi(t)&=\frac1{4\pi^2}\int_{x=0}^{\infty}\left\{\int_{v=-\infty}^{\infty}e^{itv}\frac{\ai(i2^{-1/3}v+4^{1/3}x)}{\ai(i2^{-1/3}v)}\,dv\int_{u=-\infty}^{\infty}\frac{\int_{y=0}^{4^{1/3}x}\ai(iu+y)\,dy}{\ai(iu)^2}\,du\right\}\,dx\nonumber\\
&=\frac1{2^{1/3}\cdot4\pi^2}\int_{x=0}^{\infty}\left\{\int_{v=-\infty}^{\infty}e^{i2^{1/3}tv}\frac{\int_{y=x}^{\infty}\ai(iv+y)\,dy}{\ai(iv)}\,dv\int_{u=-\infty}^{\infty}\frac{\ai(iu+x)}{\ai(iu)^2}\,du\right\}\,dx\nonumber\\
&=\frac1{2^{1/3}\cdot4\pi^2}\int_{x=0}^{\infty}\left\{\int_{v=-\infty}^{\infty}e^{i2^{1/3}t(v-u)}\frac{\int_{y=x}^{\infty}\ai(iv+y)\,dy}{\ai(iv)}\,dv\int_{u=-\infty}^{\infty}\frac{e^{i2^{1/3}tu}\ai(iu+x)}{\ai(iu)^2}\,du\right\}\,dx.
\end{align}

Let $t>0$. For convenience of notation, we consider $\psi(2^{-1/3}t)$ instead of $\psi(t)$ (this makes the first factor in the inner integral in the expression on the right-hand side $\exp\{itv\}$ instead of $\exp\{i2^{1/3}tv\}$). Introducing an extra parameter $\e>0$, we first study the behavior of:
\begin{align*}
&\int_{y=x\e}^{\infty}\int_{v=-\infty}^{\infty}e^{it(v-u)}\frac{\ai(iv+y)}{\ai(iv)}\,dv\,dy
=\frac1{i}\int_{y=x\e}^{\infty}\int_{v=-i\infty}^{i\infty}e^{tv}\frac{\ai(v+u+y)}{\ai(v+u)}\,dv\,dy.
\end{align*}
We move the integration path of the inner integral to a path, parallel to the imaginary axis, and passing through the (approximate saddle) point $\tfrac14(y/t)^2$ on the real axis.
This yields, for $y>0$ and $|v|\to\infty$,
\begin{align*}
&\frac1i\int_{v=-i\infty}^{i\infty}e^{tv}\frac{\ai(v+u+y)}{\ai(v+u)}\,dv\sim
\exp\left\{-\frac{y^2}{4t}\right\}\frac{y\sqrt{\pi}}{t^{3/2}}\,.
\end{align*}
Integration of the expression on the right-hand side on the interval $[x\e,\infty)$ gives:
\begin{align*}
\int_{y=x\e}^{\infty}\exp\left\{-\frac{y^2}{4t\e^{2}}\right\}\frac{y\sqrt{\pi}}{t^{3/2}}\,dy=\frac{2e^{-\tfrac14\e^2x^2/t}\sqrt{\pi}}{\sqrt{t}}\,.
\end{align*}
So we find:
\begin{align*}
\psi(2^{-1/3}t)
&=\lim_{\e\downarrow0}\frac{\e}{2^{1/3}\cdot4\pi^2}\int_{x=0}^{\infty}\int_{y=\e x}^{\infty}\left\{\int_{v=-\infty}^{\infty}e^{it(v-u)}\frac{\ai(iv+y)}{\ai(iv)}\,dv\,dy\int_{u=-\infty}^{\infty}\frac{e^{i2^{1/3}tu}\ai(iu+\e x)}{\ai(iu)^2}\,du\right\}\,dx\\
&=\lim_{\e\downarrow0}\frac{\e}{2^{1/3}\cdot4\pi^2}\int_{x=0}^{\infty}\left\{\sqrt{\pi}\int_0^{\infty}\frac{e^{-\tfrac14\e^2x^2/t}}{\sqrt{t}}\int_{u=-\infty}^{\infty}\frac{e^{itu}\ai(iu+x\e)}{\ai(iu)^2}\,du\right\}\,dx\\
&=\frac{1}{2^{4/3}\pi}\int_{u=-\infty}^{\infty}\frac{e^{itu}}{\ai(iu)}\,du
=\frac12\frac{1}{2^{1/3}\pi}\int_{u=-\infty}^{\infty}\frac{e^{itu}}{\ai(iu)}\,du,
\end{align*}
noting that the contribution to the integrals on the right-hand side of the first equality over the region $\{v:|v|\le M\}$ tends to zero for each $M>0$, which implies that we can shift to the asymptotic expansion of the Airy function $\ai$ in the first inner integral.
Hence:
$$
\psi(t)=\frac12\frac{1}{2^{1/3}\pi}\int_{u=-\infty}^{\infty}\frac{e^{i2^{1/3}tu}}{\ai(iu)}\,du=\frac12\frac{1}{4^{1/3}\pi}\int_{u=-\infty}^{\infty}\frac{e^{itu}}{\ai(i2^{-1/3}u)}\,du=\tfrac12\f(-t).
$$
At $t=0$ we get
$$
\psi(0)=\lim_{t\downarrow0}\psi(t)
=\tfrac12\lim_{t\downarrow0}\f(-t)=\tfrac12\f(0),
$$
which shows that the relation also holds at $t=0$.
\end{proof}

\bibliographystyle{amsplain}
\bibliography{cupbook}

\providecommand{\bysame}{\leavevmode\hbox to3em{\hrulefill}\thinspace}
\providecommand{\MR}{\relax\ifhmode\unskip\space\fi MR }
\providecommand{\MRhref}[2]{%
  \href{http://www.ams.org/mathscinet-getitem?mr=#1}{#2}
}
\providecommand{\href}[2]{#2}
\begin{thebibliography}{10}

\bibitem{chernoff:64}
Herman Chernoff, \emph{Estimation of the mode}, Ann. Inst. Statist. Math.
  \textbf{16} (1964), 31--41. \MR{0172382 (30 \#2601)}

\bibitem{daniels:85}
H.~E. Daniels and T.~H.~R. Skyrme, \emph{The maximum of a random walk whose
  mean path has a maximum}, Adv. in Appl. Probab. \textbf{17} (1985), no.~1,
  85--99. \MR{778595 (86h:60158)}

\bibitem{dembo:08}
Amir Dembo and Andrea Montanari, \emph{Finite size scaling for the core of
  large random hypergraphs}, Ann. Appl. Probab. \textbf{18} (2008), no.~5,
  1993--2040. \MR{2462557 (2009m:05162)}

\bibitem{gro:89}
P.~Groeneboom, \emph{{B}rownian motion with a parabolic drift and {A}iry
  functions}, Probab. Theory Related Fields \textbf{81} (1989), no.~1, 79--109.
  \MR{981568 (90c:60052)}

\bibitem{piet:10b}
Piet Groeneboom, \emph{The maximum of {B}rownian motion minus a parabola},
  Electron. J. Probab. \textbf{15} (2010), no. 62, 1930--1937. \MR{2738343
  (2011k:60274)}

\bibitem{piet:11d}
\bysame, \emph{Vertices of the least concave majorant of {B}rownian motion with
  parabolic drift}, Electron. J. Probab. \textbf{16} (2011), no. 84,
  2234--2258. \MR{2861676}

\bibitem{nico_piet:11}
Piet Groeneboom and Nico~M. Temme, \emph{The tail of the maximum of {B}rownian
  motion minus a parabola}, Electron. Commun. Probab. \textbf{16} (2011),
  458--466. \MR{2831084 (2012h:60247)}

\bibitem{svante:13}
Svante Janson, \emph{Moments of the location of the maximum of {B}rownian
  motion with parabolic drift}, Electron. Commun. Probab. \textbf{18} (2013),
  no. 15, 1--8.

\bibitem{svante:10}
Svante Janson, Guy Louchard, and Anders Martin-L{\"o}f, \emph{The maximum of
  {B}rownian motion with parabolic drift}, Electron. J. Probab. \textbf{15}
  (2010), no. 61, 1893--1929. \MR{2738342 (2012b:60271)}

\bibitem{leandro:12}
Leandro~P.R. Pimentel, \emph{On the location of the maximum of a continuous
  stochastic process}, Submitted, 2012.

\bibitem{prudnikov:90}
A.~P. Prudnikov, Yu.~A. Brychkov, and O.~I. Marichev, \emph{Integrals and
  series. {V}ol. 3}, Gordon and Breach Science Publishers, New York, 1990, More
  special functions, Translated from the Russian by G. G. Gould. \MR{1054647
  (91c:33001)}

\bibitem{paavo:88}
Paavo Salminen, \emph{On the first hitting time and the last exit time for a
  {B}rownian motion to/from a moving boundary}, Adv. in Appl. Probab.
  \textbf{20} (1988), no.~2, 411--426. \MR{938153 (89i:60161)}

\bibitem{nico:85}
N.~M. Temme, \emph{A convolution integral equation solved by {L}aplace
  transformations}, Proceedings of the international conference on
  computational and applied mathematics ({L}euven, 1984), vol. 12/13, 1985,
  pp.~609--613. \MR{793989 (86g:45004)}

\end{thebibliography}

\end{document}